\let\cprod\times
\newcommand*{\relrelbarsep}{.386ex}
\newcommand*{\relrelbar}{%
  \mathrel{%
    \mathpalette\@relrelbar\relrelbarsep
  }%
}
\newcommand*{\@relrelbar}[2]{%
  \raise#2\hbox to 0pt{$\m@th#1\relbar$\hss}%
  \lower#2\hbox{$\m@th#1\relbar$}%
}
\providecommand*{\rightrightarrowsfill@}{%
  \arrowfill@\relrelbar\relrelbar\rightrightarrows
}
\providecommand*{\leftleftarrowsfill@}{%
  \arrowfill@\leftleftarrows\relrelbar\relrelbar
}
\providecommand*{\xrightrightarrows}[2][]{%
  \ext@arrow 0359\rightrightarrowsfill@{#1}{#2}%
}
\providecommand*{\xleftleftarrows}[2][]{%
  \ext@arrow 3095\leftleftarrowsfill@{#1}{#2}%
}
\renewcommand{\times}{\cdot}
\DeclarePairedDelimiter{\abs}{\lvert}{\rvert}
\newcommand{\noloc}{\nobreak\mskip6mu plus1mu\mathpunct{}\nonscript\mkern-\thinmuskip{:}\mskip2mu\relax} 
\newcommand{\isom}{\cong}
\newcommand{\xto}[1]{\mathbin{\xrightarrow{#1}}} 
\newcommand{\isoto}{\xto\sim}
\newcommand{\xtofrom}[1]{\mathbin{\xleftrightarrow{#1}}}
\newcommand{\isotofrom}{\xtofrom\sim}
\newcommand{\setst}{\mathrel{|}}
\newcommand{\isect}{\mathbin{\cap}}
\newcommand{\union}{\mathbin{\cup}}
\newcommand{\bigunion}{\bigcup}
\newcommand{\dunion}{\mathbin{\sqcup}}
\renewcommand{\emptyset}{\varnothing}
\renewcommand{\subset}{\subseteq}
\renewcommand{\supset}{\supseteq}
\newcommand{\comp}{\mathbin{\circ}}
\newcommand{\id}[1]{\mathrm{id}_{#1}}
\newcommand{\injto}{\mathrel{\hookrightarrow}}
\newcommand{\surjto}{\mathrel{\twoheadrightarrow}}
\newcommand{\bigdsum}{\bigoplus}
\newcommand{\Z}{\mathbb{Z}}
\DeclareMathOperator{\img}{im}
\newcommand{\tensor}{\otimes}
\DeclareMathOperator{\supp}{supp}
\newcommand{\Spec}{\mathrm{Spec}\,}
\newcommand{\MaxSpec}{\mathrm{MaxSpec}\,}
\newcommand{\red}{\mathrm{red}}
\newcommand{\catwfintype}[1]{\mathrm{wFinType}(#1)}
\newcommand{\catcoh}[1]{\mathrm{Coh}(#1)}
\DeclareMathOperator{\IHom}{\mathscr{H}\kern -3pt \mathit{om}}
\DeclareMathOperator{\IExt}{\mathscr{E}\kern -2pt \mathit{xt}}
\newcommand{\ri}{\mathcal O} 
\DeclareMathOperator{\Spa}{Spa}
\DeclareMathOperator{\Spv}{Spv}
\DeclareMathOperator{\JG}{JG} 
\theoremstyle{plain}
\newtheorem{theorem}{Theorem}[section]
\newtheorem{theorem*}{Theorem}
\newtheorem{proposition}[theorem]{Proposition}
\newtheorem{proposition*}[theorem*]{Proposition}
\newtheorem{corollary}[theorem]{Corollary}
\newtheorem{lemma}[theorem]{Lemma}
\theoremstyle{definition}
\newtheorem{definition}[theorem]{Definition}
\newtheorem{definition*}[theorem*]{Definition}
\newtheorem{remark}[theorem]{Remark}
\newtheorem{hypothesis*}[theorem*]{Hypothesis}
\numberwithin{equation}{theorem}
\numberwithin{figure}{subsection}
\numberwithin{table}{subsection}
\newlist{thmenum}{enumerate}{1}
\setlist[thmenum]{label=(\roman*), ref=\thetheorem.(\roman*)}
\newlist{propenum}{enumerate}{1}
\setlist[propenum]{label=(\roman*), ref=\theproposition.(\roman*)}
\newlist{corenum}{enumerate}{1}
\setlist[corenum]{label=(\roman*), ref=\thecorollary.(\roman*)}
\newlist{lemenum}{enumerate}{1}
\setlist[lemenum]{label=(\roman*), ref=\thelemma.(\roman*)}
\newlist{exampleenum}{enumerate}{1}
\setlist[exampleenum]{label=(\alph*), ref=\theexamples.(\alph*)}
\newlist{remarksenum}{enumerate}{1}
\setlist[remarksenum]{label=(\roman*), ref=\theremarks.(\roman*)}
\newlist{defenum}{enumerate}{1}
\setlist[defenum]{label=(\alph*), ref=\thedefinition.(\alph*)}
\title{Normal and Irreducible Adic Spaces, the Openness of Finite Morphisms and a Stein Factorization}
\author{Lucas Mann}
\begin{document}

\maketitle

\begin{abstract}
We transfer several elementary geometric properties of rigid-analytic spaces to the world of adic spaces, more precisely to the category of adic spaces which are locally of (weakly) finite type over a non-archimedean field. This includes normality, irreducibility (in particular irreducible components) and a Stein factorization theorem. Most notably we show that finite morphisms of adic spaces are open under mild assumptions on the base and target space. \textbf{2020 MSC:} 14G22, 11G25. 
\end{abstract}

\section*{Introduction}

The goal of this paper is to provide some elementary geometric properties of adic spaces which are locally of (weakly) finite type over a field $K$, such as normality, irreducibility and a Stein factorization theorem. We apply these results, which were previously only stated in the realm of rigid-analytic and Berkovich spaces, in \cite{mann-werner-simpson} and we hope that they will turn out to be useful for other purposes as well.

While most of the desired results could easily be transferred from the rigid-analytic world to the adic world via the usual equivalences of categories, this is not the case for statements that make direct references to the underlying topological spaces. This is true for the following main result, around which the paper evolved:

\begin{theorem} \label{rslt:intro-openness-of-finite-morphism-with-loc-irr-target}
Let $K$ be a non-archimedean field, let $X$ and $Y$ be adic spaces which are locally of (weakly) finite type over $K$ and let $f\colon Y \to X$ be a finite morphism. Assume that both $X$ and $Y$ are of the same pure dimension $d$ and that $X$ is normal. Then $f$ is open.
\end{theorem}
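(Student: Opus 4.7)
The plan is to prove openness of $f$ by combining three topological facts about $f$---closedness, surjectivity, and discreteness of its set-theoretic fibres---after using normality of $X$ to reduce to the case where $X$ is irreducible. The rigid-to-adic comparison is not directly helpful here, because openness involves rank-${>}1$ specialisations that are invisible in the classical picture, so the argument must work in the adic topology throughout.

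First I would reduce to $X$ irreducible. Since $X$ is normal, two distinct irreducible components cannot meet, for their common stalk would be normal but not a domain. Combined with the irreducible-components result proved earlier in the paper, this yields a disjoint decomposition $X = \bigsqcup_i X_i$ with each $X_i$ open-closed, normal and irreducible of pure dimension $d$; setting $Y_i := f^{-1}(X_i)$ gives an analogous disjoint decomposition of $Y$ by open-closed subspaces, and each restriction $f|_{Y_i} \colon Y_i \to X_i$ still satisfies the hypotheses of the theorem. We may therefore assume $X$ irreducible.

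Next, $f$ is closed (finite morphisms of adic spaces are closed on underlying topological spaces) and finite morphisms preserve dimension, so $f(Y) \subset X$ is a closed subset of dimension $d = \dim X$. Irreducibility of $X$ forces $f(Y) = X$, i.e.\ $f$ is surjective. As a second topological input, the fibres of $f$ are pairwise incomparable for the specialisation order: working affinoid-locally with $\Spa(B, B^+) \to \Spa(A, A^+)$ and $B$ finite over $A$, two points of $Y$ above the same $x \in X$ correspond to two valuations of a residue quotient of $B$ extending a single valuation of $A$ through a finite algebraic extension, and a classical valuation-theoretic argument shows that all such extensions share the same rank and are therefore mutually incomparable.

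With these inputs in hand the openness argument is short. Fix $y \in Y$, set $x := f(y)$, and let $V$ be an open neighbourhood of $y$. The fibre $f^{-1}(x) = \{y, y_2, \dots, y_n\}$ is finite, and for each $i \geq 2$ the incomparability above shows that $y \in Y \setminus \overline{\{y_i\}}$, which is open. Intersecting $V$ with all of these opens yields an open $V' \subset V$ with $y \in V'$ and $V' \cap f^{-1}(x) = \{y\}$. Then $Y \setminus V'$ is closed, $f(Y \setminus V')$ is closed by closedness of $f$, and $x \notin f(Y \setminus V')$ by construction. Hence $U := X \setminus f(Y \setminus V')$ is an open neighbourhood of $x$. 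For any $u \in U$ we have $f^{-1}(u) \subset V'$, and surjectivity gives $f^{-1}(u) \neq \emptyset$, so $u \in f(V') \subset f(V)$. Thus $U \subset f(V)$, proving openness.

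The main obstacle I expect is the fibre-discreteness step in the adic setting: in the rigid-analytic world it is automatic because all relevant points are of rank one, but here one must explicitly rule out higher-rank specialisations within a single fibre. I anticipate that, once translated via the spectral description of $\Spa(B, B^+)$, it reduces to the algebraic fact that extensions of a Krull valuation to a finite algebraic extension form a finite set of pairwise-incomparable valuations of the same rank; the other ingredients (closedness of finite morphisms, preservation of dimension, and the disjoint decomposition for normal $X$) should then be essentially formal consequences of the weakly-finite-type machinery already developed in the paper.
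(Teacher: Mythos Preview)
Your topological endgame breaks at the sentence ``$x \notin f(Y \setminus V')$ by construction.'' You arranged $V' \cap f^{-1}(x) = \{y\}$, which means precisely that the \emph{other} fibre points $y_2,\dots,y_n$ land in $Y \setminus V'$; hence $x = f(y_2) \in f(Y \setminus V')$ as soon as $n \ge 2$. The open set $U = X \setminus f(Y \setminus V')$ therefore need not contain $x$, and the remainder of the paragraph collapses. (What you wrote is the standard argument showing that a closed surjection with finite fibres is a \emph{quotient} map; it does not yield openness.)

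The three inputs you isolate---closedness, surjectivity, and incomparable finite fibres---are not enough to repair this. What is missing is a going-down statement: for $y \in V$ and a generalization $x'$ of $x = f(y)$, one must lift $x'$ to a generalization $y'$ of $y$, which then lies in $V$. This is in fact true here (in analytic adic spaces all specializations are vertical, and the convex subgroups of the value group correspond bijectively across a finite residue-field extension), but it is an additional valuation-theoretic input on the same level as your incomparability step, not a formal consequence of the three listed properties; and even granting it, one must still argue that $f(V)$ is constructible before concluding openness in a spectral space. The paper sidesteps this chain entirely: after localizing to $X = \Spa(A,A^+)$ with $A$ a normal domain and passing to an irreducible component of $Y$ so that $B$ is a domain with $A \hookrightarrow B$ finite, it invokes a theorem of Huber--Knebusch asserting that $\Spv(f^*)\colon \Spv B \to \Spv A$ is open, and then transports openness down to $\Spa$ via Huber's retraction from $\Spv$ to $\Cont$.
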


A similar result for Berkovich spaces is found in \cite[Lemma 3.2.4]{berkovichspectral}, but \cref{rslt:intro-openness-of-finite-morphism-with-loc-irr-target} cannot be derived from it. Instead, we will use Huber's approach in \cite[Lemma 1.7.9]{huber-etale-cohomology} to reduce the claim to a similar claim about schemes, which is well-known to be true. Our proof is given in \cref{rslt:openness-of-finite-morphism-with-loc-irr-target}.

The second main result of this paper is the following adic version of the Stein Factorization Theorem:

\begin{theorem} \label{rslt:intro-stein-factorization}
Let $K$ be a non-archimedean field, let $X$ and $Y$ be adic spaces which are locally of (weakly) finite type over $K$ and let $f\colon Y \to X$ be a proper map of finite type. Then $f$ factors as
\begin{align*}
	Y \xto{h} Z \xto{g} X
\end{align*}
with the following properties:
\begin{thmenum}
	\item The map $g$ is finite.
	\item The map $h$ is proper and has geometrically connected fibers.
\end{thmenum}
\end{theorem}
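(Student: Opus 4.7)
The plan is to imitate the classical construction of Stein factorization, in which $Z$ arises as a relative ``spectrum'' of the coherent pushforward algebra $f_* \ri_Y$. Since the desired conclusions (finiteness of $g$, properness of $h$ with geometrically connected fibers) are all local on $X$ and compatible with gluing, I would first reduce to the case that $X = \Spa(A, A^+)$ is affinoid.

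In this setting, the key finiteness input is that $B := \Gamma(Y, \ri_Y)$ is a finite $A$-algebra. This is an adic analogue of Kiehl's proper direct image theorem, and should be accessible either by a direct argument in the adic world or by transferring the known rigid-analytic statement through the usual equivalence of categories between rigid-analytic spaces and a full subcategory of adic spaces of (weakly) finite type over $K$. Given this, one sets $Z := \Spa(B, B^+)$, where $B^+$ denotes the integral closure of the image of $A^+$ in $B$. By construction, $g\colon Z \to X$ is a finite morphism, and the identity map $B \to \Gamma(Y, \ri_Y)$ induces the desired morphism $h\colon Y \to Z$ with $f = g \comp h$. Properness of $h$ is then immediate from properness of $f$ together with separatedness of the finite morphism $g$, by the standard cancellation lemma for proper morphisms.

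The principal obstacle is verifying that $h$ has geometrically connected fibers. By construction one expects $h_* \ri_Y = \ri_Z$, which follows formally from $g_* h_* \ri_Y = f_* \ri_Y = g_* \ri_Z$ and conservativity of $g_*$ for the finite morphism $g$. Then, for a geometric point $\bar z \to Z$, a proper base change argument should identify $\Gamma(Y_{\bar z}, \ri_{Y_{\bar z}})$ with a henselian local ring, which carries no nontrivial idempotents, so that $Y_{\bar z}$ is connected. The subtlety here, and the reason the statement cannot simply be imported from the rigid-analytic theory, is that adic spaces carry higher-rank points in addition to the classical rank-one points, so one must either set up proper base change directly for proper adic morphisms, or reduce the fiber-connectedness claim at a non-classical point $z$ to the analogous claim at a rank-one specialization of $z$, exploiting that classical points are dense and that connectedness is preserved under such specialization.

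Once the affinoid case is settled, the global case follows by gluing: the construction of $Z$ is canonical (determined by the coherent sheaf $f_* \ri_Y$), the local pieces agree on overlaps, and both ``$g$ finite'' and ``$h$ proper with geometrically connected fibers'' are local on the base.
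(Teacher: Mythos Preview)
Your construction of $Z$ from the coherent algebra $f_*\ri_Y$, the verification that $g$ is finite, that $h$ is proper, and that $h_*\ri_Y = \ri_Z$, all match the paper's argument essentially verbatim. The divergence is in how you propose to pass from $h_*\ri_Y = \ri_Z$ to geometrically connected fibers.

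Your suggestion of a ``proper base change argument'' identifying $\Gamma(Y_{\bar z},\ri_{Y_{\bar z}})$ with a henselian local ring is the place where the proposal is not yet a proof. In the scheme world this step is Zariski's connectedness theorem, which rests on the formal functions theorem; but no such theorem is available here, and base change of $h_*\ri_Y$ to a (geometric) point is not flat, so one cannot simply commute pushforward with pullback. The alternative you mention, reducing higher-rank fibers to rank-one fibers, is a real ingredient but does not by itself produce connectedness. So as written, the fiber step is a genuine gap.

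The paper closes this gap by a different route that avoids any coherent base change to a point. First, it shows (\cref{rslt:geometrically-connected-fibers-equiv-connected-fibers-after-etale}) that geometric connectedness of fibers is equivalent to ordinary connectedness of fibers after every \'etale base change. Second, it proves an \'etale (i.e.\ flat) base change for $f_*$ of coherent sheaves (\cref{rslt:etale-base-change-for-coherent-sheaves}), so the condition $h_*\ri_Y=\ri_Z$ persists after \'etale base change. Third, and this is the step you are missing, it proves a purely topological lemma (\cref{rslt:disconnected-fiber-implies-disconnected-neighbourhood}): if a fiber $Y_x$ were disconnected, one uses the Hausdorffness of the Berkovich spectrum $\lvert Y\rvert^B$ to separate the two pieces by disjoint opens in $Y$, and then properness of $h$ produces an open neighbourhood $U\ni x$ with $h^{-1}(U)$ disconnected. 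This contradicts $h_*\ri_Y=\ri_Z$ since $\ri_Z(U)$ has no nontrivial idempotents while $\ri_Y(h^{-1}(U))$ would. Thus the paper never computes $\Gamma(Y_{\bar z},\ri_{Y_{\bar z}})$; it replaces formal functions by \'etale base change plus a spreading-out argument through the Berkovich topology.
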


While similar results can be found in the rigid-analytic setting (see \cite[Proposition 9.6.3/5]{nla.cat-vn772374}) and in Berkovich's theory (see \cite[Proposition 3.3.7]{berkovichspectral}), both of these references do not show that the fibers of $h$ are \emph{geometrically} connected. Our proof is presented in \cref{rslt:stein-factorization}.

This paper is structured as follows. In \cref{sec:normal-and-irreducible} we introduce the notions of normal and irreducible adic spaces of (weakly) finite type over a non-archimedean field $K$, thereby explaining all of the terminology occurring in \cref{rslt:intro-openness-of-finite-morphism-with-loc-irr-target}. Although one could transfer most of the definitions and results from the rigid-analytic world, we instead decided to work directly in the adic world and build everything from ground up; we found this approach more elegant while not requiring much more effort. It also allows us to work in a slightly more general setting than what the rigid-analytic world provides us with (e.g. we allow canonical compactifications). For most of the basic definitions and results we follow \cite{conrad-irr-comp-of-rigid-spaces}, with the necessary modifications to the adic world supplied where necessary. The main result of \cref{sec:normal-and-irreducible} is \cref{rslt:intro-openness-of-finite-morphism-with-loc-irr-target}.

In \cref{sec:stein-factorization} we prove the Stein Factorization \cref{rslt:intro-stein-factorization}. This is done by first transferring the rigid-analytic Stein Factorization Theorem to the adic world and then provide an additional argument (analogous to the scheme case) to show that the fibers of $h$ are \emph{geometrically} connected.

\paragraph{Notation and Conventions.} Throughout the paper, we fix a non-archimedean field $K$, i.e. $K$ is equipped with a non-trivial non-archimedean valuation under which it is complete. We let $\catwfintype K$ denote the category of adic spaces $X$ which are locally of weakly finite type over $K$ (see \cite[Definition 1.2.1.(i)]{huber-etale-cohomology}). If $X = \Spa(A, A^+)$ is affinoid then this finiteness hypothesis means precisely that $A$ is of topologically finite type over $K$ (there is no assumption on $A^+$).

\paragraph{Acknowledgments.} I am grateful to Annette Werner for helpful discussions and many comments on this paper, and to Torsten Wedhorn for suggesting to look into Lemma 1.7.9 of Huber's book.

\section{Normal and Irreducible Adic Spaces} \label{sec:normal-and-irreducible}

Let $K$ and $\catwfintype K$ be as in the conventions. We will introduce the notions of normality, irreducibility and irreducible components for objects $X \in \catwfintype K$. We also study the interactions of these notions with each other and with the purity of dimensions. Most of the ideas are taken from the rigid-analytic analogue, specifically from \cite{conrad-irr-comp-of-rigid-spaces}. At the end of the section we prove \cref{rslt:intro-openness-of-finite-morphism-with-loc-irr-target}.

The main technical idea is to reduce all questions about an adic space $X \in \catwfintype K$ to questions about the local rings at those points corresponding to maximal ideals of an affinoid covering. This idea is made more precise by the following definition and lemma.

\begin{definition}[cf. Definition 3.2 in \cite{joao-master-thesis}]
Let $X \in \catwfintype K$. The \emph{Jacobson-Gelfand spectrum} of $X$ is the subset
\begin{align*}
	\JG(X) \subset X
\end{align*}
of all rank-1 points $x \in X$ such that there is an affinoid open neighborhood $U = \Spa(A, A^+)$ of $x \in X$ with $\supp x \subset A$ being a maximal ideal.
\end{definition}

\begin{lemma} \label{rslt:properties-of-JG}
Let $X \in \catwfintype K$.
\begin{lemenum}
	\item The Jacobson-Gelfand spectrum is local, i.e. for every open subset $U \subset X$ we have $\JG(U) = \JG(X) \isect U$. Similarly, for every closed adic subspace $Z \subset X$ we have $\JG(Z) = \JG(X) \isect Z$. Moreover, $\JG(X) \subset X$ is dense.
	\item Suppose that $X = \Spa(A, A^+)$ is affinoid. Then $\supp\colon \JG(X) \isoto \MaxSpec A$ is a continuous bijection.
	\item In the setting of (ii), let $x \in \JG(X)$ with corresponding maximal ideal $\mathfrak m = \supp x \subset A$. Then $\hat\ri_{X,x} = \hat A_{\mathfrak m}$.
	\item \label{rslt:X-connected-iff-JG-connected} $X$ is connected if and only if $\JG(X)$ is connected. If $X = \Spa(A, A^+)$ is affinoid then $X$ is connected if and only if $\Spec A$ is connected.
\end{lemenum}
\end{lemma}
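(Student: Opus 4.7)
My plan is to prove the four items in turn, reducing most statements to the affinoid case and exploiting the Nullstellensatz for Tate algebras: for $A$ topologically of finite type over $K$, every maximal ideal $\mathfrak m \subset A$ has residue field $A/\mathfrak m$ finite over $K$, hence carries a unique rank-1 valuation extending that of $K$.

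For (i), the inclusions $\JG(U) \supset \JG(X) \isect U$ and $\JG(Z) \supset \JG(X) \isect Z$ are immediate from the definition. For the reverse, given $x \in \JG(X) \isect U$ with an affinoid neighborhood $V = \Spa(A, A^+)$ on which $\mathfrak m = \supp x$ is maximal, I shrink $V$ to a rational subset $V' = \Spa(A', A'^+) \subset V \isect U$ containing $x$; the evaluation $A \to A/\mathfrak m$ factors through $A'$ and remains surjective, forcing the image $\mathfrak m A' \subset A'$ to be maximal with $A'/\mathfrak m A' \isoto A/\mathfrak m$. The closed case is analogous, since a closed adic subspace is locally $\Spa(A/I, \cdot)$ and maximal ideals of $A/I$ correspond bijectively to maximal ideals of $A$ containing $I$. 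For density, I reduce to the affinoid case: every nonempty rational open of $\Spa(A, A^+)$ is itself affinoid $\Spa(B, B^+)$ with $B$ nonzero of topologically finite type, so by the Nullstellensatz $B$ has a maximal ideal, yielding a classical point in $\JG(X)$.

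For (ii), the inverse map sends $\mathfrak m \in \MaxSpec A$ to the rank-1 point arising from the unique valuation on the finite extension $A/\mathfrak m$, and uniqueness of this valuation shows $\supp$ is bijective onto $\MaxSpec A$; continuity of $\supp\colon \JG(X) \to \MaxSpec A$ follows from continuity of the global support map $\Spa(A, A^+) \to \Spec A$ (the preimage of $V(f)$ is the closed subset $\{f = 0\}$ of $\Spa$). For (iii), the rational neighborhoods $V' \ni x$ considered in (i) form a cofinal system whose rings of sections are essentially localizations of $A$ at elements outside $\mathfrak m$ (with appropriate completions); their filtered colimit computes $\ri_{X,x}$, and after $\mathfrak m$-adic completion one recovers $\hat A_\mathfrak m$, as is standard in Huber's formalism.

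For (iv), I first handle the affinoid case $X = \Spa(A, A^+)$: $X$ is connected iff $A$ has no nontrivial idempotents iff $\Spec A$ is connected, using the standard correspondence between idempotents of global sections and clopen decompositions (for $\Spa$ and $\Spec$ respectively, this being a general property of affinoid adic spaces and of affine schemes). For the equivalence $X$ connected $\Leftrightarrow$ $\JG(X)$ connected, the forward direction is easy: a nontrivial idempotent $e \in A$ gives $X = \Spa(eA, \cdot) \dunion \Spa((1-e)A, \cdot)$, and intersecting with $\JG$ yields a nontrivial clopen splitting of $\JG(X)$. For the converse, I use density of $\JG$ together with the Noetherian property of $A$ (which ensures only finitely many connected components of $X$, each clopen and meeting $\JG$) to argue that a clopen decomposition of $\JG(X)$ lifts to one of $X$, producing the required idempotent. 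The general case follows by applying the locality of $\JG$ from (i) to an affinoid cover and checking that connectedness is captured by the same gluing data. The main obstacle I anticipate is precisely this converse in the affinoid case: since the continuous bijection $\supp\colon \JG(X) \to \MaxSpec A$ from (ii) is not known to be a homeomorphism, extracting an idempotent of $A$ from a clopen decomposition of $\JG(X)$ requires careful use of the density of $\JG$ and the Noetherian finiteness of connected components of $X$, rather than a naive transport of topology.
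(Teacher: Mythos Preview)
Your treatment of (i)--(iii) is more self-contained than the paper's (which simply cites Proposition~3.3 of \cite{joao-master-thesis} for all of (ii), (iii), and density), and the underlying ideas are correct. Two small corrections: in (i) you have the ``immediate'' inclusion backwards for open $U$---it is $\JG(U) \subset \JG(X) \cap U$ that is tautological (a witnessing affinoid in $U$ is one in $X$), while $\JG(U) \supset \JG(X) \cap U$ is precisely what your shrinking argument establishes. Similarly, in (iv) your ``forward direction'' argument (idempotent of $A$ disconnects $X$, hence $\JG(X)$) is actually the contrapositive of the easy implication $\JG(X)$ connected $\Rightarrow$ $X$ connected, not of the hard one.

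The genuine gap is in (iv), and you have put your finger on it yourself. You correctly isolate the hard direction ($X$ connected $\Rightarrow$ $\JG(X)$ connected) and the obstacle (the continuous bijection $\supp$ is not a homeomorphism), but the tools you propose---density of $\JG$ plus Noetherian finiteness of connected components of $X$---do not do the job. If $X$ is connected there is only one component, so the Noetherian observation is vacuous; and density alone, applied to a clopen splitting $\JG(X) = V_1 \sqcup V_2$, only produces \emph{disjoint} opens $O_1, O_2 \subset X$ with $\JG(X) \subset O_1 \cup O_2$, not a cover of $X$. The complement $X \setminus (O_1 \cup O_2)$ is closed and misses $\JG(X)$, but nothing you have written rules out its being nonempty. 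The paper's argument supplies exactly this missing mechanism: one \emph{defines} $U_i$ to be the union of all opens $U \subset X$ with $\JG(U) \subset V_i$ (automatically disjoint by density), and then shows they cover by taking an affinoid neighborhood $U = \Spa(A, A^+)$ of any point; if $\JG(U)$ is genuinely split by $V_1, V_2$, one invokes that $A$ is a \emph{Jacobson} ring to pass from a disconnection of $\MaxSpec A$ to one of $\Spec A$, hence to a nontrivial idempotent of $A$, hence to a clopen splitting of $U$ itself whose pieces land in $U_1, U_2$. The Jacobson property of $A$ (not merely its Noetherianness) is the key input you are missing.
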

\begin{proof}
Clearly $K$ is a Jacobson-Tate ring in the sense of \cite[Definition 3.1]{joao-master-thesis}, hence $X$ is a Jacobson adic space by \cite[Proposition 3.3.(1)]{joao-master-thesis}. Thus (ii), (iii) and the density of $\JG(X) \subset X$ follow from \cite[Proposition 3.3.(2,3)]{joao-master-thesis}. The claim $\JG(U) = \JG(X) \isect U$ reduces easily to the case that $X$ is affinoid and $U$ is a rational open subset. Then the claim follows from (ii) and \cite[Proposition 3.3.(2)]{joao-master-thesis}. The corresponding statement for $Z$ follows easily by reducing to the affinoid case and then using (ii) and \cite[Lemma 00G9]{stacks-project}.

It remains to prove (iv). Clearly, if $\JG(X)$ is connected then so is $X$. Conversely, assume that $\JG(X) = U_1' \dunion U_2'$ for some open subsets $U_1', U_2' \subset \JG(X)$. For $i = 1, 2$ let $U_i$ be the union of all open subsets $U \subset X$ with $\JG(U) \subset U_i'$. Clearly $U_1$ and $U_2$ are disjoint open subsets of $X$, so we only have to check that they cover $X$ in order to finish the proof. Let $x \in X$ be given and let $U = \Spa(A, A^+)$ be an affinoid open neighborhood of $x$. Assume that $U$ is not contained in $U_1$ or $U_2$. Then $U_1'$ and $U_2'$ produce a disconnection of $\MaxSpec A$ and since $A$ is a Jacobson ring (by \cite[Proposition 3.3.(3)]{joao-master-thesis}) this produces a disconnection of $\Spec A$ and hence of $U$ (this also proves the second part of the claim). Then one part of this disconnection is contained in $U_1$ and the other part is contained in $U_2$, so that $x$ lies in $U_1$ or $U_2$.
\end{proof}

Apart from the Jacobson-Gelfand spectrum, the other main tool for working with $X \in \catwfintype K$ is the fact that if $X = \Spa(A, A^+)$ is affinoid then $A$ is an excellent ring by \cite[Théorème 2.13]{ducros-excellent-rings}. This allows us to make the following definition of normality. Note that in the following, a ring $A$ is called normal if all its localizations at prime ideals are domains which are integrally closed in their quotient fields; in particular, normal rings are reduced but not necessarily domains.

\begin{definition}
Let $X \in \catwfintype K$. We say that $X$ is \emph{normal} (resp. \emph{reduced}) if $X$ can be covered by affinoid adic spaces of the form $\Spa(A, A^+)$ such that $A$ is a normal (resp. reduced) ring.
\end{definition}

\begin{lemma} \label{rslt:normality-is-local}
For $X \in \catwfintype K$ the following are equivalent:
\begin{lemenum}
	\item $X$ is normal (resp. reduced).
	\item For every affinoid open subspace $\Spa(A, A^+) \injto X$, $A$ is a normal (resp. reduced) ring.
	\item For every $x \in \JG(X)$, $\hat\ri_{X,x}$ is a normal (resp. reduced) ring.
\end{lemenum}
\end{lemma}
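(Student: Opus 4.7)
The plan is a three-step cycle: (ii) $\Rightarrow$ (i) is immediate from the definition, since $X$ admits an affinoid open cover and by (ii) each affinoid in that cover has normal (resp.\ reduced) coordinate ring. The two non-trivial implications (i) $\Rightarrow$ (iii) and (iii) $\Rightarrow$ (ii) both rest on the fact that for an affinoid $\Spa(A, A^+) \in \catwfintype K$ the ring $A$ is excellent (by \cite[Théorème 2.13]{ducros-excellent-rings}), so that for every maximal ideal $\mathfrak m \subset A$ the map $A_{\mathfrak m} \to \hat A_{\mathfrak m}$ is a regular (in particular faithfully flat) homomorphism. Consequently normality and reducedness pass in both directions between $A_{\mathfrak m}$ and its completion.

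For (i) $\Rightarrow$ (iii), I would pick $x \in \JG(X)$, choose an affinoid open $U = \Spa(A, A^+)$ from an affinoid cover witnessing normality (resp.\ reducedness) that contains $x$, and use parts (i)--(iii) of \cref{rslt:properties-of-JG} to identify $\hat\ri_{X,x}$ with $\hat A_{\mathfrak m}$ for the maximal ideal $\mathfrak m = \supp x$. Normality (resp.\ reducedness) of $A$ descends to $A_{\mathfrak m}$ by localization and ascends to $\hat A_{\mathfrak m}$ along the regular map $A_{\mathfrak m} \to \hat A_{\mathfrak m}$, since geometrically regular fibers preserve both properties.

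For (iii) $\Rightarrow$ (ii), I would start with an arbitrary affinoid open $\Spa(A, A^+) \injto X$. For each maximal ideal $\mathfrak m \subset A$, \cref{rslt:properties-of-JG} gives a unique $x \in \JG(X)$ lying in $\Spa(A, A^+)$ with $\supp x = \mathfrak m$, so $\hat A_{\mathfrak m} = \hat\ri_{X,x}$ is normal (resp.\ reduced) by hypothesis. Faithful flatness of $A_{\mathfrak m} \to \hat A_{\mathfrak m}$ descends the property to $A_{\mathfrak m}$, and since normality (resp.\ reducedness) of a Noetherian ring can be verified on localizations at maximal ideals, $A$ itself is normal (resp.\ reduced).

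The main obstacle is essentially bookkeeping rather than mathematical: one must pin down the right ring-theoretic statements about excellent Noetherian rings, namely that completion preserves normality and reducedness (via geometric regularity of the formal fibers) and that both properties descend along faithfully flat maps. These are standard but are the decisive inputs that allow a clean local-to-global transfer between $X$, its affinoid charts, and its completed stalks.
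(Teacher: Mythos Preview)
Your proposal is correct and follows essentially the same approach as the paper: both arguments hinge on the excellence of $A$ (via \cite[Théorème 2.13]{ducros-excellent-rings}) to conclude that $A$ is normal (resp.\ reduced) if and only if each $\hat A_{\mathfrak m}$ is, and then invoke \cref{rslt:properties-of-JG} to translate between maximal ideals and points of $\JG(X)$. The paper compresses your cycle into a single biconditional, but the content is the same.
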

\begin{proof}
Let $U = \Spa(A, A^+) \injto X$ be an open subspace. Then $A$ is an excellent ring by \cite[Théorème 2.13]{ducros-excellent-rings}. Thus by \cite[Lemmas 0FIZ, 0C23 and 07NZ]{stacks-project} the ring $A$ is normal (resp. reduced) if and only if for all maximal ideals $\mathfrak m \subset A$, the completed localization $\hat A_{\mathfrak m}$ is normal (resp. reduced). Now the claim follows easily from \cref{rslt:properties-of-JG}.
\end{proof}

Next we want to construct the normalization. For any ring $A$ we denote by $\tilde A$ its normalization, i.e. the integral closure of the associated reduced ring $A_\red$ inside its total ring of fractions.

\begin{lemma} \label{rslt:normalization-is-local}
Let $X = \Spa(A, A^+) \in \catwfintype K$. Let $U = \Spa(B, B^+) \injto X$ be an affinoid open subset. Then the natural map $B \to \tilde A \tensor_A B$ is a normalization.
\end{lemma}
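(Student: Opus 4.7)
The strategy is to identify $\tilde A \tensor_A B$ as the normalization of $B$ by verifying it is finite over $B$, is a normal ring, and embeds correctly between $B_\red$ and the total ring of fractions of $B_\red$. All three points will flow from two ingredients: the excellence of $A$ (by \cite[Théorème 2.13]{ducros-excellent-rings}) and the fact that $A \to B$ is a \emph{regular} ring map, i.e.\ flat with geometrically regular fibers.

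Finiteness is immediate: excellence of $A$ implies $A$ is Nagata, so $\tilde A$ is finite over $A_\red$, whence $\tilde A \tensor_A B$ is finite over $B$. For the regularity of $A \to B$, I would reduce by a standard local argument using a rational cover of $U$ in $X$ to the case where $B$ is obtained from $A$ by adjoining restricted power series (and then localizing at an element), which is a well-known example of a regular map of Noetherian rings. The induced map $\tilde A \to \tilde A \tensor_A B$ is then regular as well (regularity is stable under base change), and since $\tilde A$ is normal, regular base change preserves normality (e.g.\ Stacks Project Tag 033A), so $\tilde A \tensor_A B$ is a normal, and in particular reduced, ring.

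It remains to identify $B \to \tilde A \tensor_A B$ with the normalization of $B$. I would verify: (a) the map factors through $B_\red$ via the identity $A_\red \tensor_A B = B_\red$, itself a consequence of flatness of $A \to B$ together with geometric reducedness of its fibers; (b) the induced map $B_\red \hookrightarrow \tilde A \tensor_A B$ is injective, by flat base change applied to the inclusion $A_\red \hookrightarrow \tilde A$; and (c) the total ring of fractions of $\tilde A \tensor_A B$ coincides with that of $B_\red$, so that the normality established above really realizes integral closedness in the correct fraction ring.

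The main technical obstacle I anticipate is point (c): one must check that every non-zero-divisor of $B_\red$ acts invertibly on the finite normal extension $\tilde A \tensor_A B$, and conversely that every element of the latter is a fraction from $B_\red$. To handle this cleanly, I would decompose $\tilde A = \prod_i \widetilde{A/\mathfrak p_i}$ over the finitely many minimal primes $\mathfrak p_i$ of $A$ (available by Noetherianness), base-change each factor separately along the flat map $A \to B$, and invoke the standard compatibility of localization at non-zero-divisors with flat base change in the domain case. Once the total rings of fractions are matched, the uniqueness of the normalization finishes the identification.
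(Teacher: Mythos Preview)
Your overall strategy works and is genuinely different from the paper's. The paper does not prove that $A \to B$ is regular; instead, after reducing to $A$ reduced, it cites \cite[Theorem~1.2.2]{conrad-irr-comp-of-rigid-spaces}, whose hypotheses are that $A \to B$ is flat, that $\tilde A \tensor_A B$ is normal, and that $B/\mathfrak p B$ is reduced for every minimal prime $\mathfrak p$ of $A$. Flatness is \cite[Lemma~1.7.6]{huber-etale-cohomology}; normality and reducedness are checked \emph{geometrically}, by observing that $\Spa(\tilde A \tensor_A B,\cdot)$ and $\Spa(B/\mathfrak p B,\cdot)$ are open affinoid subspaces of the normal space $\tilde X$ and of the reduced space $\Spa(A/\mathfrak p,\cdot)$ respectively, and then invoking the already-proved \cref{rslt:normality-is-local}. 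Thus the paper recycles its adic ``normality is local'' lemma and a black box from Conrad, whereas you work purely ring-theoretically via regularity of $A\to B$ and essentially reprove Conrad's theorem by hand in your steps (a)--(c). The paper's route is shorter because the geometric input is already on the shelf; yours is more self-contained.

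There is, however, a gap in your justification of the regularity of $A \to B$. A rational subdomain ring is not obtained by ``adjoining restricted power series and then localizing at an element'': it is a \emph{quotient} $A\langle T_1,\dots,T_n\rangle/(gT_i-f_i)_i$, and the fibres of such a map over primes of $A$ are not visibly geometrically regular from that presentation. The clean argument goes through completed local rings. For every maximal ideal $\mathfrak n \subset B$ lying over a maximal ideal $\mathfrak m \subset A$ one has $\hat A_{\mathfrak m} \cong \hat\ri_{X,x} \cong \hat B_{\mathfrak n}$ by \cref{rslt:properties-of-JG} applied to both $X$ and $U$; since $A$ is excellent the composite $A_{\mathfrak m} \to B_{\mathfrak n} \to \hat B_{\mathfrak n} \cong \hat A_{\mathfrak m}$ is regular, and regularity of $A_{\mathfrak m}\to B_{\mathfrak n}$ then descends along the faithfully flat map $B_{\mathfrak n} \to \hat B_{\mathfrak n}$. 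With this correction your argument goes through.
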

\begin{proof}
As in the proof of \cref{rslt:normality-is-local}, $A$ is an excellent ring and in particular a Nagata ring (cf. \cite[Lemma 07QV]{stacks-project}). We can moreover replace $A$ by $A_\red$ and thus assume that $A$ is reduced. We want to apply \cite[Theorem 1.2.2]{conrad-irr-comp-of-rigid-spaces}, for which we need to verify the following three properties:
\begin{enumerate}[(a)]
	\item $B$ is flat over $A$: This is \cite[Lemma 1.7.6]{huber-etale-cohomology}.

	\item The ring $\tilde A \tensor_A B$ is normal: To see this, let $\tilde X := \Spa(\tilde A, \tilde A^+)$ and $\tilde U := \Spa(\tilde A \tensor_A B, B'^+)$, where $\tilde A^+$ is the integral closure of $A^+$ in $\tilde A$ and $B'^+$ is the integral closure of $B^+$ inside $\tilde A \tensor_A B$ (note that $A \to \tilde A$ is finite because $A$ is Nagata, see \cite[Lemma 035S]{stacks-project}). Then $\tilde U = U \cprod_X \tilde X$ is an open subset of $\tilde X$. But $\tilde X$ is normal because $\tilde A$ is normal, hence $\tilde U$ and therefore also $\tilde A \tensor_A B$ must be normal by \cref{rslt:normality-is-local}.

	\item For every minimal prime $\mathfrak p \subset A$, $B/\mathfrak p B$ is reduced. This follows by a similar argument as in (b): Letting $X' = \Spa(A/\mathfrak p, A'^+)$ and $U' = \Spa(B/\mathfrak p B, B'^+)$ (where $A'^+$ and $B'^+$ are the integral closures of $A^+$ and $B^+$) we have $U' = U \cprod_X X'$, which is an open subset of $X'$, so that the reducedness of $X'$ implies the reducedness of $U'$ by \cref{rslt:normality-is-local}.
\end{enumerate}
This finishes the proof.
\end{proof}

\begin{definition}
Let $X \in \catwfintype K$.
\begin{defenum}
	\item If $X = \Spa(A, A^+)$ is affinoid then the normalization of $X$ is the space $\tilde X := \Spa(\tilde A, \tilde A^+)$, where $\tilde A^+$ is the integral closure of (the image of) $A^+$ in $\tilde A$. 
	\item For general $X$ note that if $V \subset U$ is an inclusion of affinoid open subsets of $X$ then by \cref{rslt:normalization-is-local} there is a unique isomorphism $\tilde V \isom V \cprod_U \tilde U$ over $V$. We can thus glue the $\tilde U$'s to get an adic space $\tilde X$, called the \emph{normalization} of $X$.
\end{defenum}
\end{definition}

\begin{lemma}
Let $X \in \catwfintype K$.
\begin{lemenum}
	\item The normalization $\tilde X$ is a normal adic space and the natural projection $\tilde X \to X$ is finite surjective.
	\item Given any map $Y \to X$ from a normal adic space $Y \in \catwfintype K$, the map $Y \to X$ factors uniquely as $Y \to \tilde X \to X$.
\end{lemenum}
\end{lemma}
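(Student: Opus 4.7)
The plan is to establish both claims by reducing to the affinoid situation, where the necessary commutative algebra can be invoked directly.

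For part (i), normality of $\tilde X$ can be checked locally by \cref{rslt:normality-is-local}, and on each piece $\tilde U = \Spa(\tilde A, \tilde A^+)$ it holds by construction, since $\tilde A$ is the integral closure of $A_\red$ in its total ring of fractions and hence is a normal ring. Finiteness of $\tilde X \to X$ is also local; on each affinoid the map $A \to \tilde A$ is finite because $A$ is excellent by \cite[Théorème 2.13]{ducros-excellent-rings} and hence Nagata. For surjectivity I would first verify surjectivity on $\Spec$: the map $\Spec \tilde A \to \Spec A$ factors through the homeomorphism $\Spec A_\red \to \Spec A$, and $\Spec \tilde A \to \Spec A_\red$ is surjective by going-up for the integral extension. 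Via the bijection in \cref{rslt:properties-of-JG}(ii), this translates to surjectivity on Jacobson--Gelfand points. Combining the density of JG points (\cref{rslt:properties-of-JG}(i)) with the closedness of the finite map $\tilde X \to X$ then yields set-theoretic surjectivity at every point of $X$.

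For part (ii), both existence and uniqueness of a lift $Y \to \tilde X$ can be checked locally on $X$, so I would reduce to $X = \Spa(A, A^+)$ affinoid. I would then cover $Y$ by affinoid opens $V = \Spa(B, B^+)$ with $B$ normal; by uniqueness of the local lifts these glue automatically to a global lift $Y \to \tilde X$. The problem thus reduces to the algebraic assertion that any ring map $\varphi\colon A \to B$ with $B$ normal factors uniquely through $\tilde A$. Since $B$ is reduced, $\varphi$ factors through $A_\red$; since $B$ is integrally closed in its total ring of fractions, any element of $\tilde A$ (which is integral over $A_\red$ inside $Q(A_\red)$) acquires a well-defined image in $B$, producing the desired factorization through $\tilde A$.

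The step I expect to be the main obstacle is pinning down the uniqueness of the local factorization when $B$ is not a domain, because $\tilde A$ then decomposes as a product of normalizations $\tilde{A/\mathfrak p_i}$ over the minimal primes $\mathfrak p_i$ of $A_\red$, and one must consistently match components of $Y$ with components of $\tilde X$. The strategy is to split $B$ into its finite product decomposition into normal domains (corresponding to the connected components of $Y$, cf.\ \cref{rslt:X-connected-iff-JG-connected}), observe that each factor $B_j$ is a normal domain receiving a ring map from the reduced Noetherian ring $A_\red$, and then apply the classical universal property of integral closure componentwise. Gluing the resulting unique local lifts yields the global factorization $Y \to \tilde X$ and completes the proof.
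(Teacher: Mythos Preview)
Your argument follows essentially the same path as the paper's: locality plus closedness of finite maps and density of $\JG$ for (i), and for (ii) reduction to affinoids, the ring-theoretic universal property of normalization, and gluing via uniqueness. The one point the paper makes explicit and you omit is that the factored map $\tilde A \to B$ must be \emph{continuous} to induce a morphism of adic spaces; the paper disposes of this by invoking \cite[Theorem~6.1.3/1]{nla.cat-vn772374}, which guarantees that any $K$-algebra homomorphism between affinoid $K$-algebras is automatically continuous.
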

\begin{proof}
For (i), everything except surjectivity follows directly from the definition. To prove surjectivity it is enough to show that $\JG(\tilde X) \to \JG(X)$ is surjective because $\tilde X \to X$ is a closed map and $\JG(X) \subset X$ is dense. This reduces to the affinoid case and then to the analogous result for (affine) noetherian schemes, which is \cite[Lemma 035Q (2)]{stacks-project}.

To prove (ii), assume first that $X = \Spa(A, A^+)$ and $Y = \Spa(B, B^+)$ are affinoid. Then the claim reduces easily to the statement that every continuous ring homomorphism $A \to B$ (over $K$) factors uniquely over a continuous morphism $\tilde A \to B$. This is clear (the only issue might be the continuity, but note that any $K$-algebra morphism $\tilde A \to B$ is automatically continuous by \cite[Theorem 6.1.3/1]{nla.cat-vn772374}). If $X$ and $Y$ are general, then the uniqueness of the factorization allows us to glue the desired map from local versions, thus reducing to the affinoid case.
\end{proof}

Next we want to study dimensions:

\begin{lemma}
Let $X \in \catwfintype K$.
\begin{lemenum}
	\item \label{rslt:dim-is-sup-of-local-dim} We have
	\begin{align*}
		\dim X = \sup_{x\in \JG(X)} \dim \hat\ri_{X,x},
	\end{align*}
	where the dimension on the left is the dimension of the underlying spectral space $\abs X$ (see \cite[Definition 1.8.1]{huber-etale-cohomology}) and the dimension on the right denotes the Krull dimension.

	\item \label{rslt:dim-is-upper-semi-continuous} The function
	\begin{align*}
		\dim_X\colon \JG(X) \to \Z_{\ge 0}, \qquad x \mapsto \dim_X x := \dim \hat\ri_{X,x},
	\end{align*}
	is upper semi-continuous, i.e. for every $x \in \JG(X)$ there exists some open neighborhood $U \subset X$ of $x$ such that $\dim_X u \le \dim_X x$ for all $u \in \JG(U)$.

	\item $X$ is of pure dimension $d$ (i.e. every non-empty open subset of $X$ has dimension $d$) if and only if $\dim \hat\ri_{X,x} = d$ for all $x \in \JG(X)$.
\end{lemenum}
\end{lemma}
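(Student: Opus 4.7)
For (i), I would cover $X$ by affinoid opens and use the locality of the dimension of a spectral space together with $\JG(U) = \JG(X) \isect U$ from \cref{rslt:properties-of-JG} to reduce to the affinoid case $X = \Spa(A, A^+)$. There, $A$ is noetherian, Jacobson and excellent, one has $\dim X = \dim A$ (standard, cf.\ \cite[\S 1.8]{huber-etale-cohomology}), and the elementary identity $\dim A = \sup_{\mathfrak m \in \MaxSpec A} \dim A_\mathfrak m$ holds. Combining this with $\dim A_\mathfrak m = \dim \hat A_\mathfrak m$ (completion preserves Krull dimension for noetherian local rings) and \cref{rslt:properties-of-JG} then yields the displayed formula.

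For (ii), I would again reduce to the affinoid case $X = \Spa(A, A^+)$ and set $\mathfrak m = \supp x$, $d := \dim A_\mathfrak m$. Since $A$ is excellent and of finite type over the field $K$, it is catenary and each irreducible component is equidimensional, so for every maximal ideal $\mathfrak n \subset A$ one has
\[
\dim A_\mathfrak n = \max_{\mathfrak p \subset \mathfrak n,\ \mathfrak p \text{ minimal}} \dim A/\mathfrak p.
\]
In particular $\mathfrak m$ contains no minimal prime $\mathfrak p$ with $\dim A/\mathfrak p > d$. Let $\mathfrak p_1, \dots, \mathfrak p_r$ be all such \emph{bad} minimal primes and let $Z_i \subset X$ be the closed adic subspace cut out by (finitely many generators of) $\mathfrak p_i$. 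Then $U := X \without \bigunion_{i=1}^r Z_i$ is an open neighborhood of $x$ in $X$, and for every $u \in \JG(U)$ the support $\supp u$ is a maximal ideal of $A$ containing none of the $\mathfrak p_i$, so the displayed formula gives $\dim \hat\ri_{X,u} \le d$, as desired.

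Part (iii) follows formally from (i), (ii), and the density of $\JG(X) \subset X$ asserted in \cref{rslt:properties-of-JG}. If $\dim \hat\ri_{X,x} = d$ for every $x \in \JG(X)$, then any nonempty open $U \subset X$ has nonempty $\JG(U)$ by density, and (i) applied to $U$ forces $\dim U = d$. Conversely, assume $X$ is of pure dimension $d$; then $\dim \hat\ri_{X,x} \le d$ for every $x \in \JG(X)$ by (i), and if $d' := \dim \hat\ri_{X,x} < d$ for some $x$, then (ii) produces an open neighborhood $U \ni x$ with $\dim \hat\ri_{X,u} \le d'$ for all $u \in \JG(U)$, so $\dim U \le d' < d$ by (i), contradicting purity.

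I expect the main difficulty to be (ii): the scheme-theoretic analogue uses a Zariski-basic open neighborhood of $\mathfrak m$ obtained by localizing at a single element, whereas in the adic topology the required neighborhood must be carved out by removing the closed adic subspaces cut out by the bad minimal primes. The crucial input making this argument work is that $A$ is catenary (via excellence), so that $\dim A_\mathfrak n$ is controlled purely by the irreducible components of $\Spec A$ passing through $\mathfrak n$.
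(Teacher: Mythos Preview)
Your proof is correct and follows essentially the same approach as the paper: reduce to the affinoid case, use $\dim X = \dim A$ together with the equidimensionality of each irreducible component of $\Spec A$ (the paper cites \cite[Lemma 2.1.5]{conrad-irr-comp-of-rigid-spaces} for this), and carve out the open neighborhood by removing the closed adic subspaces supported on the unwanted minimal primes. One small slip: $A$ is of \emph{topologically} finite type over $K$, not of finite type, so the equidimensionality of each $A/\mathfrak p$ does not follow from the standard fact for finite-type $K$-algebras but needs the affinoid-specific result just cited.
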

\begin{proof}
In all cases we can assume that $X = \Spa(A, A^+)$ is affinoid.

We start with (i). By \cite[Lemma 1.8.6]{huber-etale-cohomology} we have $\dim X = \dim A$, where $\dim A$ denotes the Krull dimension of $X$. On the one hand we have $\dim A = \sup_{\mathfrak m \in \MaxSpec A} \dim A_{\mathfrak m}$ and on the other hand $\dim A_{\mathfrak m} = \dim \hat A_{\mathfrak m} = \dim \hat\ri_{X,x}$ for all $\mathfrak m$ (see \cite[Lemma 07NV]{stacks-project}), where $x \in \JG(X)$ is the point of $X$ associated to $\mathfrak m$. This proves (i).

To prove (ii) let $x \in \JG(X)$ be given and let $\mathfrak m = \supp(x) \in \MaxSpec A$ be the corresponding maximal ideal in $A$. Let $I = \{ \mathfrak p_1, \dots, \mathfrak p_n \}$ be the collection of minimal prime ideals in $A$ and let $I_{\mathfrak m} \subset I$ be the subset of minimal prime ideals that lie inside $\mathfrak m$. For every $\mathfrak p \in I$ let $Z_{\mathfrak p} = \Spa(A/\mathfrak p, A^{\mathfrak p+})$ (where $A^{\mathfrak p+}$ is the integral closure of $A^+$ in $A/\mathfrak p$), which is a closed adic subspace of $X$ (cf. \cite[(1.4.1)]{huber-etale-cohomology}). Let $U := X \setminus \bigunion_{\mathfrak p \in I \setminus I_{\mathfrak m}} Z_{\mathfrak p}$.

Now choose any $u \in \JG(U)$. We claim that $\dim_X u \le \dim_X x$. To see this, let $\mathfrak n \subset A$ be the maximal ideal corresponding to $u$, so that $\dim_X u = \dim A_{\mathfrak n}$ (using \cite[Lemma 07NV]{stacks-project}). By the definition of $U$, $\mathfrak n = \supp u$ cannot contain any $\mathfrak p \in I \setminus I_{\mathfrak m}$, so that every maximal chain of prime ideals $\mathfrak p_0 \subsetneq \mathfrak p_1 \subsetneq \dots \subset \mathfrak n$ has to start with some $\mathfrak p_0 \in I_{\mathfrak m}$. We may thus replace $A$ by $A/\mathfrak p_0$ and assume that $A$ is an integral domain. Then \cite[Lemma 2.1.5]{conrad-irr-comp-of-rigid-spaces} implies that $A$ is equidimensional and we are done.

Part (iii) follows easily from (i) and (ii).
\end{proof}

\begin{corollary} \label{rslt:normal-connected-implies-pure-dim}
If $X \in \catwfintype K$ is normal and connected then it is pure-dimensional.
\end{corollary}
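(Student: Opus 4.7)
The plan is to deduce pure-dimensionality from two facts established earlier: by \cref{rslt:X-connected-iff-JG-connected}, the Jacobson-Gelfand spectrum $\JG(X)$ is connected whenever $X$ is, and by part (iii) of the dimension lemma, $X$ is pure-dimensional exactly when the function $\dim_X$ is constant on $\JG(X)$. So it suffices to show $\dim_X\colon \JG(X) \to \Z_{\ge 0}$ is \emph{locally constant}: any locally constant $\Z$-valued function on the connected space $\JG(X)$ is constant.

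The key observation is that the neighborhood construction already used to prove upper semi-continuity (part (ii) of the dimension lemma) produces, under the additional normality hypothesis, a neighborhood on which $\dim_X$ is actually constant. Concretely, I would fix $x \in \JG(X)$ and choose, via \cref{rslt:normality-is-local}, a normal affinoid open neighborhood $U = \Spa(A, A^+)$, with $x$ corresponding to the maximal ideal $\mathfrak m \subset A$. Normality of $A$ forces $A_{\mathfrak m}$ to be a domain, so $\mathfrak m$ contains a \emph{unique} minimal prime $\mathfrak p_0 \subset A$. Define $U' := U \setminus \bigunion_{\mathfrak p \ne \mathfrak p_0} Z_{\mathfrak p}$ exactly as in the earlier argument, so that $x \in U'$. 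Any $u \in \JG(U')$ corresponds to a maximal ideal $\mathfrak n$ which, by construction of $U'$, cannot contain any minimal prime other than $\mathfrak p_0$; since every maximal ideal contains at least one minimal prime, we must have $\mathfrak p_0 \subset \mathfrak n$.

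Hence $\dim_X u = \dim A_{\mathfrak n} = \dim (A/\mathfrak p_0)_{\mathfrak n/\mathfrak p_0}$, and because $A/\mathfrak p_0$ is a domain of topologically finite type over $K$, it is equidimensional by \cite[Lemma 2.1.5]{conrad-irr-comp-of-rigid-spaces}; so this dimension equals $\dim(A/\mathfrak p_0)$, independent of $u$. This proves local constancy of $\dim_X$, and the corollary follows. I do not foresee a real obstacle: the only ingredients beyond the lemmas of this section are the implication \emph{normality} $\Rightarrow$ unique minimal prime under $\mathfrak m$, and Conrad's equidimensionality result, both of which slot cleanly into the framework of the upper semi-continuity proof.
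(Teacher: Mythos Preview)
Your proof is correct and follows essentially the same route as the paper: both show that $\dim_X$ is locally constant on $\JG(X)$ by reducing to Conrad's equidimensionality result for affinoid domains, and then conclude via connectedness. The paper phrases the local step slightly more directly by observing that a \emph{connected} normal affinoid $\Spa(A,A^+)$ already has $A$ an integral domain (normal ring with connected spectrum), so Conrad's lemma applies to all of $U$ at once; your $U'$ construction, under the normality hypothesis, is precisely extracting this connected component, since the $Z_{\mathfrak p}$ for distinct minimal primes of a normal ring are pairwise disjoint.
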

\begin{proof}
If $X = \Spa(A, A^+)$ is affinoid (so that $A$ is normal) then $A$ is an integral domain, so it is equidimensional by \cite[Lemma 2.1.5]{conrad-irr-comp-of-rigid-spaces} and hence $X$ is pure dimensional by \cref{rslt:dim-is-sup-of-local-dim}. To handle the general case, for every $d \in \Z_{\ge0}$ let $U_d \subset X$ be the union of all open subspaces which are of pure dimension $d$. From the affinoid case one deduces that the $U_d$'s form a disjoint open partition of $X$. As $X$ is connected, all but one of the $U_d$'s must be empty.
\end{proof}

\begin{lemma} \label{rslt:dim-of-finite-morphism}
Let $f\colon Y \to X$ be a finite morphism in $\catwfintype K$. Then $\dim Y \le \dim X$. If $f$ is surjective then for every $x \in \JG(X)$ there is some $y \in \JG(f^{-1}(x))$ such that $\dim_Y y = \dim_X x$; in particular $\dim Y = \dim X$.
\end{lemma}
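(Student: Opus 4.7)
The approach is to reduce both claims to standard facts about finite ring extensions. Since $f$ is finite, any affinoid open $V = \Spa(A,A^+) \subset X$ pulls back to an affinoid $f^{-1}(V) = \Spa(B,B^+)$ with $B$ a finite $A$-algebra, and \cref{rslt:dim-is-sup-of-local-dim} together with \cite[Lemma 1.8.6]{huber-etale-cohomology} and \cite[Lemma 07NV]{stacks-project} allow us to compute both $\dim X$ and $\dim Y$ as suprema of $\dim A_{\mathfrak m}$ and $\dim B_{\mathfrak n}$ as $\mathfrak m$ and $\mathfrak n$ range over maximal ideals of such local affinoid models.

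For $\dim Y \le \dim X$, I would fix $y \in \JG(Y)$ lying in some affinoid $f^{-1}(V)$ as above and set $\mathfrak n = \supp(y)$, $\mathfrak m = \mathfrak n \cap A$. Because $B/\mathfrak n$ is integral over $A/\mathfrak m$ and is a field, $A/\mathfrak m$ is a field as well, so $f(y) \in \JG(X)$. Incomparability of primes in the integral extension $A/\ker(A \to B) \hookrightarrow B$ then shows that any chain of primes of $B$ below $\mathfrak n$ contracts to a strictly increasing chain of primes of $A$ below $\mathfrak m$, giving $\dim_Y y = \dim B_{\mathfrak n} \le \dim A_{\mathfrak m} = \dim_X f(y)$. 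Taking the supremum over $y$ yields the inequality.

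For the second assertion, I would fix $x \in \JG(X)$ with an affinoid neighborhood $V = \Spa(A,A^+)$, put $\mathfrak m = \supp(x)$ and $\Spa(B,B^+) = f^{-1}(V)$, and observe that dimensions and $\JG$-points are unchanged on replacing $A$ and $B$ by their reductions. Because $f$ is surjective, each $\mathfrak m' \in \MaxSpec A$ is the support of some point in the image of $f$, which must contain $\ker(A \to B)$; hence $\ker(A \to B) \subset \bigcap_{\mathfrak m'} \mathfrak m' = 0$ in $A_\red$ since $A$ is Jacobson. After reducing we thus have a finite injective, hence integral, extension $A \hookrightarrow B$. Lying-over and going-up then lift any maximal chain of primes in $A$ ending at $\mathfrak m$ to a chain in $B$ of the same length ending at some $\mathfrak n \in \MaxSpec B$ with $\mathfrak n \cap A = \mathfrak m$, so $\dim B_{\mathfrak n} \ge \dim A_{\mathfrak m}$; combined with the opposite inequality from the previous paragraph this forces equality. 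The point $y \in \JG(Y)$ over $x$ with $\supp(y) = \mathfrak n$ then realizes $\dim_Y y = \dim_X x$, and $\dim Y = \dim X$ follows from \cref{rslt:dim-is-sup-of-local-dim}. The key subtlety is the reduction to an injective finite extension so that going-up produces a full-length chain; surjectivity of $f$ is precisely what enables this step.
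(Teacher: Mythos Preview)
Your proof is correct and follows essentially the same strategy as the paper: reduce to affinoid models and invoke the commutative algebra of finite ring extensions (incomparability for $\dim Y \le \dim X$, going-up for the surjective case). The paper's version is terser---it cites \cite[Lemma 1.8.6.(ii)]{huber-etale-cohomology} and \cite[Lemma 0ECG]{stacks-project} directly rather than unpacking the going-up argument and the reduction to an injective extension---but the underlying content is the same.
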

\begin{proof}
The claim $\dim Y \le \dim X$ follows immediately from the analogous statement for schemes (see \cite[Lemma 0ECG]{stacks-project}) and \cite[Lemma 1.8.6.(ii)]{huber-etale-cohomology}. Similarly, the second claim reduces to the statement that if $A \to B$ is a finite morphism of rings such that $\Spec B \to \Spec A$ is surjective and $\mathfrak m \subset A$ is a maximal ideal then $\dim A_{\mathfrak m} = \dim (A_{\mathfrak m} \tensor_A B)$ (see e.g. \cite[Lemma 0ECG]{stacks-project}).
\end{proof}

We can finally come to the definition and study of irreducible spaces.

\begin{definition}
We say that $X \in \catwfintype K$ is \emph{irreducible} if it cannot be written as the union of two proper closed adic subspaces.
\end{definition}

\begin{lemma} \label{rslt:normal-and-connected-implies-irreducible}
If $X \in \catwfintype K$ is normal and connected then $X$ is irreducible.
\end{lemma}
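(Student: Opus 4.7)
The plan is to show that $X$ admits a global decomposition into open-and-closed irreducible subspaces; once we have this, connectedness forces there to be only one, which gives irreducibility.

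The starting point is the local structure. Cover $X$ by affinoid opens $U = \Spa(A, A^+)$ with $A$ normal, as provided by \cref{rslt:normality-is-local}. Since $A$ is a noetherian normal ring, by \cite[Lemma 0FIZ]{stacks-project} it decomposes as a finite product $A = \prod_{k=1}^n A_k$ with each $A_k$ a normal noetherian domain. Setting $A_k^+$ to be the image of $A^+$ in $A_k$, we obtain a corresponding clopen decomposition $U = \bigdunion_{k=1}^n V_k$ with $V_k = \Spa(A_k, A_k^+)$. I would check that each $V_k$ is irreducible by using density of $\JG(V_k) \subset V_k$ (from \cref{rslt:properties-of-JG}) to reduce to irreducibility of $\JG(V_k) = \MaxSpec A_k$, and then using that $A_k$ is a Jacobson domain to reduce further to irreducibility of $\Spec A_k$, which is immediate.

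The key step is then to exhibit, for each $x \in X$, a non-empty clopen irreducible subset $T_x \subset X$ containing $x$. Given $x$, pick an affinoid neighborhood $U$ of $x$ with decomposition as above and let $V_{k_0}$ be the unique piece containing $x$; set $T_x := \overline{V_{k_0}}$, the closure in $X$. This is closed by construction and irreducible as the closure of the irreducible subspace $V_{k_0}$. For openness of $T_x$, take any $y \in T_x$, choose an affinoid neighborhood $U' = \Spa(A', A'^+)$ of $y$ with decomposition $U' = \bigdunion_{k'} V'_{k'}$ of the same shape, and let $V'_{k'_0}$ be the piece containing $y$. Since $y$ lies in the closure of $V_{k_0}$, every open neighborhood of $y$ meets $V_{k_0}$, so $V'_{k'_0} \isect V_{k_0}$ is a non-empty open subset of the irreducible space $V'_{k'_0}$ and hence dense in $V'_{k'_0}$; taking closures in $X$ yields $V'_{k'_0} \subset \overline{V_{k_0}} = T_x$, exhibiting an open neighborhood of $y$ inside $T_x$.

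Once $T_x$ is known to be non-empty, open, closed, and irreducible, connectedness of $X$ immediately gives $T_x = X$, whence $X$ is irreducible. I expect the main subtlety to be verifying that each local piece $V_k$ is genuinely irreducible as an adic space (rather than merely at the level of the affine scheme $\Spec A_k$), and ensuring that the closures $T_x$ glue sensibly across overlapping affinoid covers; both points are handled by exploiting the density of the Jacobson--Gelfand spectrum combined with irreducibility of the local pieces.
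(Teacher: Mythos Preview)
There is a genuine gap. Your argument hinges on the claim that each affinoid piece $V_k = \Spa(A_k, A_k^+)$ with $A_k$ a normal domain is irreducible \emph{as a topological space}: you use this to conclude that the nonempty open $V'_{k'_0} \cap V_{k_0}$ is dense in $V'_{k'_0}$, and hence that $T_x$ is open. But affinoid adic spaces over a non-trivially valued field are essentially never topologically irreducible, even when the underlying ring is a domain. For instance, the closed unit disk $\Spa(K\langle T\rangle, \ri_K\langle T\rangle)$ contains the disjoint nonempty rational opens $\{\lvert T\rvert \le \lvert\varpi\rvert\}$ and $\{\lvert T\rvert \ge 1\}$ for any pseudo-uniformizer $\varpi$. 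Your proposed reduction does not rescue this: the bijection $\JG(V_k) \to \MaxSpec A_k$ of \cref{rslt:properties-of-JG} is continuous but not a homeomorphism (the source carries the analytic topology, the target the Zariski one), so Zariski irreducibility of $\MaxSpec A_k$ does not transfer back to $\JG(V_k)$. Concretely, take $X$ to be the unit disk, choose $x$ inside the smaller disk $\{\lvert T\rvert \le \lvert\varpi\rvert\}$, and take that smaller disk as $U = V_{k_0}$; then $T_x = \overline{V_{k_0}}$ misses the Gauss point and is therefore not open, so the clopen argument collapses.

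Note that the paper's notion of irreducibility refers only to closed \emph{adic} subspaces, and in that sense each $V_k$ is indeed irreducible; but the topological density statement you invoke does not follow from this weaker property. The paper's proof avoids the issue entirely by a dimension argument: using \cref{rslt:normal-connected-implies-pure-dim} to obtain pure-dimensionality, it shows that any closed adic subspace $Z \subset X$ containing a nonempty open must itself be open (by comparing local dimensions on $\JG(Z)$), and then connectedness forces $Z = X$.
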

\begin{proof}
It is enough to prove the following: Let $Z \subset X$ be a closed adic subspace and suppose that $Z$ contains some open subspace $U \subset X$; then $Z = X$. We can assume that both $Z$ and $U$ are connected. By \cref{rslt:normal-connected-implies-pure-dim} we know that $X$ is pure-dimensional, say $\dim X =: d$. Now let
\begin{align*}
	Z'_1 := \{ x \in \JG(Z) \setst \dim_Z x = d \}, \qquad Z'_2 := \{ x \in \JG(Z) \setst \dim_Z x < d \}.
\end{align*}
We claim that if $V \subset X$ is any affinoid connected open subset with $V \isect Z'_1 \ne \emptyset$ then $V \subset Z$. Indeed, let $V = \Spa(B, B^+)$, so that $Z \isect V = \Spa(B/J, B'^+)$ for some ideal $J \subset B$ (where $B'^+$ is the integral closure of $B^+$ in $B/J$). But then $\Spec B$ is normal and connected (cf. \cref{rslt:X-connected-iff-JG-connected}), hence irreducible, and there is some maximal ideal $\mathfrak m \subset B/J$ (the one corresponding to $x$) such that $\dim B = d = \dim (B/J)_{\mathfrak m}$. This implies $\dim B/J = \dim B$ and hence $J = (0)$, as desired.

From the previous paragraph we deduce that $Z'_1$ is open in $\JG(Z)$ and by \cref{rslt:dim-is-upper-semi-continuous} the same is true for $Z'_2$. But $\JG(Z)$ is connected by \cref{rslt:X-connected-iff-JG-connected} so that $Z'_2 = \emptyset$ because $U \subset Z$. Hence $\JG(Z) = Z'_1$ and now the previous paragraph shows that $Z$ is open in $X$. But then $Z$ is open and closed in $X$ and thus $Z = X$.
\end{proof}

\begin{lemma} \label{rslt:image-of-finite-morphism-is-adic-space}
Let $X, Y \in \catwfintype K$ and let $f\colon Y \to X$ be a finite morphism. Then $\img(f) \subset X$ is (the image of) a closed adic subspace.
\end{lemma}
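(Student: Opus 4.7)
The strategy is to realize $\img(f)$ locally as the vanishing locus of the kernel of the map on affinoid charts, and then to glue. Cover $X$ by affinoid opens $U = \Spa(A, A^+)$, so that (by the definition of finiteness) $f^{-1}(U) = \Spa(B, B^+)$ with $A \to B$ a finite ring map and $B^+$ the integral closure of the image of $A^+$ in $B$. Let $I_U := \ker(A \to B)$ and set $Z_U := \Spa(A/I_U, A'^+) \injto U$, a closed adic subspace in the sense of \cite[(1.4.1)]{huber-etale-cohomology}. I will first show $\img(f|_{f^{-1}(U)}) = Z_U$ as subsets of $U$, and then verify that the local $Z_U$'s glue.

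For the inclusion $\img(f|_{f^{-1}(U)}) \subset Z_U$, the ring map $A \to B$ factors through the quotient $A/I_U$; hence for any $y \in f^{-1}(U)$ the support of $f(y)$ contains $I_U$, so $f(y) \in Z_U$. Conversely, the induced map $A/I_U \to B$ is finite and injective, so going-up applies, and every maximal ideal $\mathfrak m \subset A/I_U$ is contracted from some (necessarily maximal) prime $\mathfrak n \subset B$. Via the bijection $\supp \colon \JG \isoto \MaxSpec$ from \cref{rslt:properties-of-JG}, this gives $\JG(Z_U) \subset f(\JG(f^{-1}(U)))$. Since finite morphisms of adic spaces are closed on underlying topological spaces, and $\JG(Z_U)$ is dense in $Z_U$ by \cref{rslt:properties-of-JG}, the closed set $\img(f|_{f^{-1}(U)})$ contains the closure of $\JG(Z_U)$, namely all of $Z_U$, which together with the first inclusion yields the desired equality.

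For the gluing step, let $V = \Spa(A_V, A_V^+) \injto U = \Spa(A_U, A_U^+)$ be an inclusion of affinoid opens. The transition map $A_U \to A_V$ is flat by \cite[Lemma 1.7.6]{huber-etale-cohomology}, so kernels commute with this base change: writing $B_U, B_V$ for the ring of functions on the preimages, one has $I_U \cdot A_V = \ker(A_V \to A_V \tensor_{A_U} B_U) = \ker(A_V \to B_V) = I_V$. This yields canonical identifications $Z_U \cprod_U V \isom Z_V$ compatible with the embeddings, so the local $Z_U$'s assemble into a single closed adic subspace $Z \injto X$ whose underlying topological space is $\img(f)$.

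The main technical point is knowing that a finite morphism of affinoid adic spaces in $\catwfintype K$ is a closed map on underlying topological spaces, which I would invoke from Huber's treatment of finite (and more generally proper) morphisms of adic spaces. Granted this, the remainder of the argument is purely local and combines the standard scheme-theoretic going-up theorem with the local-to-global tools already developed in \cref{rslt:properties-of-JG}, namely the density of $\JG$ and the flatness of rational localizations.
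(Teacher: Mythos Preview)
Your proof is correct and follows essentially the same route as the paper: reduce to the affinoid case, take $Z$ to be cut out by the kernel of $A \to B$, obtain one inclusion trivially and the other by combining surjectivity on $\MaxSpec$ (going-up) with the closedness of finite maps and the density of $\JG$. The paper simply asserts the reduction to the affinoid case and does not spell out the gluing, whereas you supply it explicitly via flatness of $A_U \to A_V$; this extra care is welcome but does not change the underlying argument.
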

\begin{proof}
We can assume that $X = \Spa(A, A^+)$ is affinoid; then $Y = \Spa(B, B^+)$ is also affinoid, $f$ is given by a finite map $f^*\colon A \to B$ and $B^+$ is the integral closure of $A^+$ in $B$. Let $J := \ker(f^*)$ and let $Z = \Spa(A/J, A'^+)$, where $A'^+$ is the integral closure of $A^+$ in $A/J$. We claim that $\img(f) = Z$. Clearly $\img(f) \subset Z$ and $\img(f)$ is closed (because $f$ is proper, e.g. by \cite[Lemma 1.4.5.(ii)]{huber-etale-cohomology}) so it is enough to show that $\img(f)$ contains all $x \in \JG(Z)$. Using \cref{rslt:properties-of-JG} this boils down to the fact that $f^*$ induces a surjective map $\MaxSpec B \surjto \MaxSpec(A/J)$ which is clear.
\end{proof}

The following definition is analogous to \cite[Definition 2.2.2]{conrad-irr-comp-of-rigid-spaces}.

\begin{definition} \label{def:irreducible-components}
Let $X \in \catwfintype K$, let $\rho\colon \tilde X \to X$ be the normalization of $X$ and let $\tilde X_i \subset \tilde X$ be the connected components of $\tilde X$. The \emph{irreducible components} of $X$ are the (reduced) closed adic subspaces $X_i := \rho(\tilde X_i) \subset X$ (cf. \cref{rslt:image-of-finite-morphism-is-adic-space}).
\end{definition}

\begin{lemma} \label{rslt:connected-components-properties}
Let $X \in \catwfintype K$ and let $(X_i)_{i\in I}$ be the irreducible components of $X$. Then the $X_i$'s are irreducible and form a locally finite cover of $X$. Moreover, for all $i \in I$ we have
\begin{align*}
	X_i \not\subset \bigunion_{i'\ne i} X_{i'}.
\end{align*}
\end{lemma}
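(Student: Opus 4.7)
The plan is to verify the three assertions using the fact that the connected components of the normalization $\tilde X$ are clopen, together with a local analysis of the normalization map $\rho\colon \tilde X \to X$. First I would observe that if $U = \Spa(A, A^+) \subset X$ is affinoid with $\mathfrak p_1, \dots, \mathfrak p_n$ the (finitely many) minimal primes of $A_\red$, then the total ring of fractions decomposes as $\prod_j Q(A/\mathfrak p_j)$ and the normalization decomposes accordingly as $\tilde A = \prod_j \widetilde{A/\mathfrak p_j}$, a finite product of normal Noetherian domains. Hence $\tilde U = \Spa(\tilde A, \tilde A^+)$ is a finite disjoint union of affinoid open subspaces $\tilde U_j = \Spa(\widetilde{A/\mathfrak p_j}, \cdot)$, each connected by \cref{rslt:X-connected-iff-JG-connected}. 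These are therefore the connected components of $\tilde U$, and since $\tilde X$ admits a cover by such affinoids, every connected component $\tilde X_i \subset \tilde X$ is clopen.

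Given this, the irreducibility and covering properties come out easily. Each $\tilde X_i$ is open in the normal space $\tilde X$, hence normal by \cref{rslt:normality-is-local}, and connected by construction, so irreducible by \cref{rslt:normal-and-connected-implies-irreducible}. If $X_i = Z_1 \union Z_2$ decomposes into two closed adic subspaces, their preimages under $\rho$ are closed adic subspaces of $\tilde X_i$ covering it, so one equals $\tilde X_i$; its image under $\rho$ is then all of $X_i$, forcing $X_i = Z_j$. The $X_i$'s cover $X$ because $\rho$ is surjective. For local finiteness, any $\tilde X_i$ meeting the affinoid $\tilde U = \rho^{-1}(U)$ above is clopen in $\tilde U$ and so contains at least one of its finitely many components $\tilde U_j$; thus only finitely many $X_i$ can meet $U$.

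The main work is the irredundancy statement $X_i \not\subset \bigunion_{i'\ne i} X_{i'}$. For each $i$, I would choose an affinoid $U = \Spa(A, A^+)$ with $U \isect X_i \ne \emptyset$, so that some component $\tilde U_{j_0}$ of $\tilde U$ lies inside $\tilde X_i$, corresponding to a minimal prime $\mathfrak p_{j_0}$ of $A_\red$. Since distinct minimal primes are pairwise incomparable, the open subset $V(\mathfrak p_{j_0}) \setminus \bigunion_{j \ne j_0} V(\mathfrak p_j)$ of $\Spec A$ is non-empty, and because $A$ is Jacobson it contains a maximal ideal $\mathfrak m$, corresponding to a point $x \in \JG(U)$. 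The fibre $\rho^{-1}(x)$ consists of points whose support is a maximal ideal of $\tilde A$ lying over $\mathfrak m$; any such maximal ideal lives in exactly one factor $\widetilde{A/\mathfrak p_j}$ with $\mathfrak p_j \subset \mathfrak m$, which by the choice of $\mathfrak m$ forces $j = j_0$. Hence $\rho^{-1}(x) \subset \tilde U_{j_0} \subset \tilde X_i$, so $x \notin \rho(\tilde X_{i'}) = X_{i'}$ for any $i' \ne i$. The delicate step is precisely this bookkeeping: matching minimal primes of $A_\red$, connected components of $\tilde U$, and global $\tilde X_i$'s in such a way that a single point of $X_i$ has all its preimages in the correct component.
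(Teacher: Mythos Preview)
Your proof is correct. The first two assertions (irreducibility and local finiteness of the cover) follow the same strategy as the paper, though you make the local structure of $\tilde U$ more explicit where the paper simply invokes quasi-compactness of $\rho^{-1}(U)$ to bound the number of components it meets.

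The genuine divergence is in the irredundancy statement. The paper argues by dimension: setting $U = \tilde X_i$ and $V = \tilde X \setminus \tilde X_i$, it shows that the closed adic subspace $\rho^{-1}(\rho(V)) \cap U \subset U$ has strictly lower dimension than $U$ at every point (reducing to an affinoid and then a scheme-theoretic statement), hence cannot exhaust $U$. Your approach instead produces an explicit witness: using the Jacobson property of $A$ you locate a maximal ideal $\mathfrak m$ containing exactly one minimal prime $\mathfrak p_{j_0}$, and then track the fibre $\rho^{-1}(x)$ through the product decomposition $\tilde A = \prod_j \widetilde{A/\mathfrak p_j}$ to see it lies entirely in $\tilde U_{j_0} \subset \tilde X_i$. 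This is more elementary in that it avoids the dimension theory (\cref{rslt:dim-is-sup-of-local-dim}, \cref{rslt:normal-connected-implies-pure-dim}, \cref{rslt:dim-of-finite-morphism}) and relies only on prime avoidance and the Jacobson property already established in \cref{rslt:properties-of-JG}. The paper's argument, on the other hand, yields the stronger quantitative conclusion that the overlap $X_i \cap \bigl(\bigcup_{i' \ne i} X_{i'}\bigr)$ has everywhere lower dimension, not merely that it is proper.
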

\begin{proof}
Let $\rho\colon \tilde X \to X$ be the normalization of $X$ with connected components $(\tilde X_i)_{i\in I}$. Each $\tilde X_i$ is irreducible by \cref{rslt:normal-and-connected-implies-irreducible} and maps surjectively onto $X_i$. It follows easily that $X_i$ is irreducible (any $Z_1 \union Z_2 \subset X_i$ can be lifted to $(Z_1 \cprod_{X_i} \tilde X_i) \union (Z_2 \cprod_{X_i} \tilde X_i) = \tilde X$). The local finiteness of the cover follows from the fact that for every quasi-compact subset $U \subset X$, the preimage $\rho^{-1}(U) \subset \tilde X$ is still quasi-compact and can therefore only meet finitely many connected components.

To prove the second part of the claim let $i \in I$ be given and let $U := \tilde X_i$ and $V := \tilde X \setminus \tilde X_i$. It is enough to show that $\rho^{-1}(\rho(V))$ does not contain $U$. In fact we claim that the closed adic subspace $\rho^{-1}(\rho(V)) \isect U \subset U$ has strictly lower dimension than $U$ at all points. This can be checked locally, so we can assume that $X$ and $\tilde X = U \dunion V$ are affinoid. Then the claim follows easily from the analogous statement for schemes.
\end{proof}

\begin{corollary} \label{rslt:irreducible-equiv-normalization-connected}
Let $X \in \catwfintype K$ with normalization $\tilde X$. Then the following are equivalent:
\begin{corenum}
	\item $X$ is irreducible.
	\item $\tilde X$ is irreducible.
	\item $\tilde X$ is connected.
\end{corenum}
\end{corollary}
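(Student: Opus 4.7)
My plan is to establish the cycle of implications (iii) $\Rightarrow$ (ii) $\Rightarrow$ (i) $\Rightarrow$ (iii). The essential inputs are that $\tilde X$ is normal by construction, that the projection $\rho\colon \tilde X \to X$ is finite and surjective, and the properties of irreducible components collected in \cref{rslt:connected-components-properties}. The implication (iii) $\Rightarrow$ (ii) is immediate from \cref{rslt:normal-and-connected-implies-irreducible} applied to $\tilde X$.

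For (ii) $\Rightarrow$ (i) I argue contrapositively. Given a decomposition $X = Z_1 \cup Z_2$ with each $Z_i \subsetneq X$ a closed adic subspace, pull $Z_i$ back along $\rho$: locally on an affinoid open $\Spa(A, A^+) \subset X$, $Z_i$ is cut out by an ideal, and its preimage in the corresponding affinoid piece $\Spa(\tilde A, \tilde A^+)$ of $\tilde X$ is cut out by the extended ideal, so $\rho^{-1}(Z_i) \subset \tilde X$ is again a closed adic subspace. Surjectivity of $\rho$ forces $\rho^{-1}(Z_i) \subsetneq \tilde X$, since otherwise $Z_i \supset \rho(\tilde X) = X$. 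Then $\tilde X = \rho^{-1}(Z_1) \cup \rho^{-1}(Z_2)$ witnesses that $\tilde X$ is reducible.

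For (i) $\Rightarrow$ (iii), suppose conversely that $\tilde X$ is disconnected, with connected components $(\tilde X_i)_{i \in I}$ and $|I| \ge 2$, and fix any $i_0 \in I$. Both $\tilde X_{i_0}$ and its complement $\tilde X \setminus \tilde X_{i_0}$ are clopen in $\tilde X$, so each inherits the structure of a closed (and open) adic subspace; composing with $\rho$ yields two finite morphisms to $X$, so by \cref{rslt:image-of-finite-morphism-is-adic-space} their images $X_{i_0}$ and $\bigunion_{i \ne i_0} X_i$ are closed adic subspaces of $X$. Together they cover $X$ by surjectivity of $\rho$, and the last assertion of \cref{rslt:connected-components-properties} applied to the index $i_0$ shows the union on the right is a proper subset, while applied to any $i_1 \ne i_0$ it gives $X_{i_1} \not\subset X_{i_0}$, so that $X_{i_0}$ is proper as well. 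Hence $X$ is not irreducible.

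The one subtle point to keep track of throughout is making sure that each subset we manipulate genuinely carries the structure of a closed adic subspace rather than merely that of a closed subset of the underlying spectral space; this is guaranteed locally by the explicit quotient description of preimages in step two and by \cref{rslt:image-of-finite-morphism-is-adic-space} for the images in step three. In particular, even when $|I|$ is infinite so that $\tilde X \setminus \tilde X_{i_0}$ fails to be quasi-compact, it is still a clopen subspace whose inclusion is a closed immersion, so its composition with $\rho$ remains a finite morphism and \cref{rslt:image-of-finite-morphism-is-adic-space} applies without modification.
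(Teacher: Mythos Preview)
Your proof is correct and follows essentially the same route as the paper's. The paper handles (ii)~$\Leftrightarrow$~(iii) via \cref{rslt:normal-and-connected-implies-irreducible}, derives (ii)~$\Rightarrow$~(i) by the same pull-back-of-a-decomposition argument (which it phrases as ``as in the proof of \cref{rslt:connected-components-properties}''), and for the remaining implication simply observes that a disconnected $\tilde X$ forces $X$ to have more than one irreducible component, whence $X$ is reducible by \cref{rslt:connected-components-properties}; your treatment of this last step just unpacks that sentence, and your care about the possibly infinite index set is justified (closed immersions are finite, and finiteness is stable under composition), though one could equally invoke the local finiteness of the $X_i$ from \cref{rslt:connected-components-properties} to see directly that $\bigcup_{i\ne i_0} X_i$ is a closed adic subspace.
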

\begin{proof}
The equivalence of (ii) and (iii) is \cref{rslt:normal-and-connected-implies-irreducible}. As in the proof of \cref{rslt:connected-components-properties}, if $\tilde X$ is irreducible then so is $X$. Conversely, assume that $\tilde X$ is reducible, i.e. disconnected. Then $X$ has more than one irreducible component and is therefore reducible by \cref{rslt:connected-components-properties}.
\end{proof}

\begin{proposition} \label{rslt:irreducible-implies-pure-dimensional}
If $X \in \catwfintype K$ is irreducible then it is pure-dimensional.
\end{proposition}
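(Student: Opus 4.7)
The plan is to reduce the statement to the normal case via the normalization, which we already know is pure-dimensional when connected. Concretely, let $\rho\colon \tilde X \to X$ be the normalization. Since $X$ is irreducible, \cref{rslt:irreducible-equiv-normalization-connected} tells us that $\tilde X$ is connected (and irreducible). Being connected and normal, $\tilde X$ is pure-dimensional by \cref{rslt:normal-connected-implies-pure-dim}; call this common dimension $d$. In particular, $\dim \hat\ri_{\tilde X, y} = d$ for every $y \in \JG(\tilde X)$ by part (iii) of the dimension lemma.

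Next, I transfer this purity across $\rho$. The map $\rho$ is finite and surjective, so by \cref{rslt:dim-of-finite-morphism}, for every $x \in \JG(X)$ there exists some $y \in \JG(\rho^{-1}(x))$ with $\dim_X x = \dim_{\tilde X} y$. Since $y \in \JG(\tilde X)$ and $\tilde X$ is pure of dimension $d$, this forces $\dim_X x = d$. As $x \in \JG(X)$ was arbitrary, part (iii) of the dimension lemma then yields that $X$ itself is of pure dimension $d$, which is exactly what we want.

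The main conceptual step is the first one — recognizing that \cref{rslt:irreducible-equiv-normalization-connected} lets us replace irreducibility of $X$ by connectedness of $\tilde X$, so that we can invoke the already-established purity result \cref{rslt:normal-connected-implies-pure-dim}. No additional obstacles should arise, since the comparison of local dimensions along a finite surjective morphism is exactly what \cref{rslt:dim-of-finite-morphism} provides, and the characterization of purity in terms of the Jacobson–Gelfand spectrum makes the passage from $\tilde X$ back to $X$ immediate.
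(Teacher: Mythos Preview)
Your proof is correct and follows essentially the same approach as the paper: pass to the normalization $\tilde X$, use \cref{rslt:irreducible-equiv-normalization-connected} to get connectedness, apply \cref{rslt:normal-connected-implies-pure-dim} to obtain purity of $\tilde X$, and then transfer this back to $X$ via \cref{rslt:dim-of-finite-morphism}. The paper's write-up is slightly more terse, but the logical structure is identical.
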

\begin{proof}
Let $\rho\colon \tilde X \to X$ be the normalization of $X$. Then $\tilde X$ is connected by \cref{rslt:irreducible-equiv-normalization-connected}, hence pure-dimensional by \cref{rslt:normal-connected-implies-pure-dim}. But by \cref{rslt:dim-of-finite-morphism} for every $x \in \JG(X)$ there is some $\tilde x \in \JG(\tilde X)$ with $\dim_X x = \dim_{\tilde X} \tilde x = \dim \tilde X$.
\end{proof}

Most of the properties of $X \in \catwfintype K$ discussed so far are stable under étale extensions of $X$:

\begin{lemma} \label{rslt:properties-stable-under-etale-extension}
Let $X, Y \in \catwfintype K$ and let $Y \to X$ be an étale map.
\begin{lemenum}
	\item If $X$ is normal (resp. reduced) then so is $Y$.
	\item Let $\tilde X \to X$ be the normalization of $X$. Then $Y \cprod_X \tilde X$ is a normalization of $Y$.
	\item If $X$ is of pure dimension $d$ then so is $Y$.
\end{lemenum}
\end{lemma}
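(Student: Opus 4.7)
The strategy is to exploit the local structure of étale morphisms in $\catwfintype K$: by definition, after replacing $Y$ by an open cover, any étale map $Y \to X$ factors as $V \injto W \to X$ with $V \injto W$ an open immersion of affinoid adic spaces and $W \to X$ finite étale (cf.\ \cite{huber-etale-cohomology}). Since all three statements are local on $Y$, I would reduce to treating these two cases separately.

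The open immersion case is essentially already in hand: (i) is \cref{rslt:normality-is-local}, (ii) follows from \cref{rslt:normalization-is-local} after writing $Y \cprod_X \tilde X = Y \cprod_W (W \cprod_X \tilde X)$, and (iii) follows from the locality of $\JG$ (\cref{rslt:properties-of-JG}) together with the characterization of pure dimension via the completed local rings at $\JG$-points. In the finite étale case with affinoid source $\Spa(B, B^+) \to \Spa(A, A^+)$ corresponding to an étale ring map $A \to B$, part (i) reduces via \cref{rslt:normality-is-local} to the classical fact that étale ring extensions preserve reducedness and normality (for instance via Serre's criterion $R_1 + S_2$, both conditions being stable under flat base change with regular fibres). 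For (iii), integrality of $A \to B$ forces $\supp y \cap A$ to be a maximal ideal whenever $\supp y \subset B$ is, so finite étale maps preserve $\JG$; and the induced local map $A_{\mathfrak m} \to B_{\mathfrak n}$ is a local étale extension, hence preserves Krull dimension.

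The main obstacle---and the most substantive step---is (ii) in the finite étale case. Here I would show directly that $\tilde A \tensor_A B$ is a normalization of $B$ by applying \cite[Theorem 1.2.2]{conrad-irr-comp-of-rigid-spaces} along the lines of the proof of \cref{rslt:normalization-is-local}, after replacing $A$ by $A_\red$. The three conditions amount to: (a) flatness of $A \to B$, immediate from étaleness; (b) normality of $\tilde A \tensor_A B$, which one extracts by applying (i) to the base-changed étale morphism $\Spa(\tilde A \tensor_A B, \cdot) \to \Spa(\tilde A, \tilde A^+)$ and using that $\tilde A$ is normal by construction; (c) reducedness of $B/\mathfrak p B = B \tensor_A A/\mathfrak p$ for each minimal prime $\mathfrak p \subset A$, again from (i) since $B \tensor_A A/\mathfrak p$ is étale over the domain $A/\mathfrak p$. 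The logical order of the subparts thus matters: (i) must be established first so that it can feed into the verification of the normality input (b) of (ii).
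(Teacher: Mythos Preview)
Your proposal is correct and follows essentially the same route as the paper: both reduce via the local structure of étale maps (Huber's \cite[Lemma 2.2.8]{huber-etale-cohomology}) to the finite étale affinoid case, then invoke the corresponding scheme-theoretic facts for (i) and (iii) and Conrad's \cite[Theorem 1.2.2]{conrad-irr-comp-of-rigid-spaces} for (ii). The paper is terser—it does not spell out the verification of the three hypotheses of Conrad's theorem nor treat the open-immersion case separately—but your more detailed argument (which mirrors the proof of \cref{rslt:normalization-is-local}) fills in exactly those steps.
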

\begin{proof}
Since all of the discussed properties are local (see \cref{rslt:normality-is-local} for normality and reducedness), we can use \cite[Lemma 2.2.8]{huber-etale-cohomology} to reduce the claim to $X = \Spa(A, A^+)$ and $Y = \Spa(B, B^+)$  affinoid with $A \to B$ finite étale. Then (i) and (iii) follow directly from the analogous property for schemes (see e.g. \cite[Lemmas 033B, 033C, 039S]{stacks-project}). Moreover, (ii) follows from \cite[Theorem 1.2.2]{conrad-irr-comp-of-rigid-spaces}.
\end{proof}

We now come to the proof of \cref{rslt:intro-openness-of-finite-morphism-with-loc-irr-target}. The following proof essentially reduces the result to a similar result for schemes. We start with the affinoid version, from which the global result will be derived more or less formally.

\begin{lemma} \label{rslt:openness-of-finite-morphism-affinoid-integral-case}
Let $f\colon Y = \Spa(B, B^+) \to X = \Spa(A, A^+)$ be a finite morphism of affinoid noetherian adic spaces. Assume that both $A$ and $B$ are integral domains, that $A$ is normal and that the associated map $f^*\colon A \to B$ is injective. Then $f$ is open.
\end{lemma}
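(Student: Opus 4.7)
The plan is to reduce openness of the adic morphism $f$ to openness of the associated scheme morphism $f_0 \colon \Spec B \to \Spec A$, following Huber's approach in \cite[Lemma 1.7.9]{huber-etale-cohomology}.

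First I would verify that $f_0$ is open. Since $A \to B$ is a finite injection of Noetherian domains with $A$ normal, the Cohen-Seidenberg Going Down theorem applies. A finite morphism between Noetherian schemes that satisfies Going Down is automatically open: finite morphisms are closed (hence their image is closed), and Going Down is equivalent to saying that the complement behaves correctly under generalization, which combined with Going Up (automatic for integral maps) gives openness of $f_0$ on the level of underlying spaces.

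Next I would transfer the openness to the adic setting. Since rational subsets form a basis for the topology of $Y$, it suffices to show that $f(V)$ is open for every rational open $V = R(b_1/b_0,\dots,b_n/b_0) \subset Y$. Given $x \in f(V)$, pick $y \in V$ with $f(y) = x$ and let $\mathfrak{q} = \supp y \subset B$, $\mathfrak{p} = \supp x \subset A$. The open subscheme $\{b_0 \ne 0\} \subset \Spec B$ has image $f_0(\{b_0 \ne 0\}) \subset \Spec A$ that is open by the first step and contains $\mathfrak{p}$. Pulling this back along the support map $\supp\colon X \to \Spec A$ yields an open subset of $X$ containing $x$; intersecting with suitable rational subsets of $X$ defined by elements of $A$ that control the values $|b_i/b_0|$ on the finite fiber should produce an open neighborhood of $x$ that is contained in $f(V)$.

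The main obstacle is the last step: verifying that every $x'$ in the constructed neighborhood of $x$ actually admits a preimage \emph{inside} $V$, not merely inside the larger set $\supp_Y^{-1}(\{b_0 \ne 0\})$. This requires lifting $x'$ to a valuation on $B$ that satisfies the strict inequalities defining $V$, which is where the finiteness of $f$ (giving a finite fiber over $x'$) and the theory of extensions of valuations along finite integral ring maps enter in an essential way. Mirroring Huber's valuative book-keeping from Lemma 1.7.9 is the technical heart of the proof; the assumption that $B$ is a domain and $A \to B$ is injective should ensure that these extensions exist and match up with the generic behavior controlled by $f_0$.
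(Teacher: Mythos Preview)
Your proposal has a genuine gap, and it also misidentifies what Huber's Lemma 1.7.9 actually does. Huber does \emph{not} pass to the scheme map $\Spec B \to \Spec A$; he passes to the valuation spectrum map $\Spv(f^*)\colon \Spv B \to \Spv A$. The paper follows exactly this route: one introduces the intermediate subsets $X' \subset \Spv A$ and $Y' \subset \Spv B$ cut out by the conditions $v(a) \le 1$ for $a \in A^+$ (resp.\ $B^+$) and $v(a) < 1$ for $a \in A^{\circ\circ}$ (resp.\ $B^{\circ\circ}$), together with the canonical retractions $s_X\colon X' \to X$ and $s_Y\colon Y' \to Y$ coming from Huber's horizontal specialization. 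The hypotheses on $A$, $B$, and $f^*$ feed directly into \cite[Corollary 2.1.7.(iii)]{huber-knebusch-on-valuation-spectra}, which gives openness of $\Spv(f^*)$; finiteness of $f$ then forces $B^{\circ\circ} = A^{\circ\circ} B^+$, so $Y' = (f')^{-1}(X')$ and $f'\colon Y' \to X'$ is open. The commuting square with retractions $s_X, s_Y$ and inclusions $X \hookrightarrow X'$, $Y \hookrightarrow Y'$ then yields openness of $f$ formally.

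Your attempt via $\Spec$ stalls precisely where you say it does. Knowing that $f_0(\{b_0 \ne 0\})$ is Zariski-open in $\Spec A$ gives you almost no control over which valuations on $B$ above a given $x' \in X$ land in the rational subset $V$: the support map $Y \to \Spec B$ collapses the valuative information that distinguishes points of $V$ from points of $\supp^{-1}(\{b_0 \ne 0\}) \setminus V$. The phrase ``intersecting with suitable rational subsets of $X$ defined by elements of $A$ that control the values $|b_i/b_0|$'' is the whole difficulty, not a step one can wave through; in general the $b_i$ are not in $A$, and there is no evident way to bound $|b_i(y')/b_0(y')|$ in terms of data on $X$ alone without already having something like openness of the valuation-spectrum map. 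The $\Spv$ argument sidesteps this entirely because rational opens of $\Spa$ pull back from constructible subsets of $\Spv$, where the Huber--Knebusch openness result applies directly.
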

\begin{proof}
The argument is similar to \cite[Lemma 1.7.9]{huber-etale-cohomology}. As in the reference we define
\begin{align*}
	X' &:= \{ v \in \Spv A \setst \text{$v(a) \le 1$ for $a \in A^+$ and $v(a) < 1$ for $a \in A^{\circ\circ}$} \},\\
	Y' &:= \{ v \in \Spv B \setst \text{$v(b) \le 1$ for $b \in B^+$ and $v(b) < 1$ for $b \in B^{\circ\circ}$} \}.
\end{align*}
Moreover, by \cite[Proposition 2.6.(iii)]{huber-cts-valuations} there are retractions $r_X\colon \Spv A \to \Spv(A, A^{\circ\circ} A)$ and $r_Y\colon \Spv B \to \Spv(B, B^{\circ\circ} B)$. From \cite[Theorem 3.1]{huber-cts-valuations} we deduce that $X' \isect \Spv(A, A^{\circ\circ} A) = X$ and $Y' \isect \Spv(B, B^{\circ\circ} B) = Y$. It follows easily that by restricting $r_X$ and $r_Y$ we obtain retractions $s_X\colon X' \to X$ and $s_Y\colon Y' \to Y$. Let $f'\colon Y' \to X'$ be the restriction of the natural map $\Spv(f^*)\colon \Spv B \to \Spv A$. We obtain the following commutative diagram:
\begin{center}\begin{tikzcd}
	Y \arrow[hookrightarrow,r] \arrow[d,"f"] \arrow[rr,bend left,"\id Y"] & Y' \arrow[r,swap,"s_Y"] \arrow[d,"f'"] & Y \arrow[d,"f"]\\
	X \arrow[hookrightarrow,r] \arrow[rr,bend right,swap,"\id X"] & X' \arrow[r,"s_X"] & X
\end{tikzcd}\end{center}
By \cite[Corollary 2.1.7.(iii)]{huber-knebusch-on-valuation-spectra} the assumptions on $A$, $B$ and $f^*\colon A \to B$ are enough to guarantee that $\Spv(f^*)\colon \Spv B \to \Spv A$ is open. Note also that $B^+$ is the integral closure of $A^+$ in $B$ and $B^{\circ\circ} = A^{\circ\circ} B^+$ by finiteness of $f$ (see \cite[(1.4.2)]{huber-etale-cohomology}). This implies
\begin{align*}
	Y' &= \{ v \in \Spv B \setst \text{$v(b) \le 1$ for $b \in A^+$ and $v(b) < 1$ for $b \in A^{\circ\circ}$} \}
\end{align*}
and hence $f'^{-1}(X') = Y'$. Together with the openness of $\Spv(f^*)$ we deduce that $f'$ is open. But then the above diagram immediately implies that $f$ is open as well.
\end{proof}

\begin{theorem} \label{rslt:openness-of-finite-morphism-with-loc-irr-target}
Let $X, Y \in \catwfintype K$ and let $f\colon Y \to X$ be a finite morphism. Assume that both $X$ and $Y$ are of the same pure dimension $d$ and that $X$ is normal. Then $f$ is open.
\end{theorem}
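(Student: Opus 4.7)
The plan is to reduce the theorem to the affinoid integral case handled by \cref{rslt:openness-of-finite-morphism-affinoid-integral-case}. Since openness is a local property on $X$, and since the preimage under the finite map $f$ of an affinoid open subspace of $X$ is again affinoid, I first reduce to the case where $X = \Spa(A, A^+)$ and $Y = \Spa(B, B^+)$ are both affinoid, so that $A$ and $B$ are noetherian affinoid algebras over $K$.

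Because $A$ is a noetherian normal ring, $\Spec A$ is a finite disjoint union of spectra of normal integral domains; via \cref{rslt:X-connected-iff-JG-connected} this produces a corresponding decomposition of $X$ into finitely many clopen affinoid pieces, and $Y$ decomposes accordingly. I may therefore assume in addition that $X$ is connected, so that $A$ is a normal integral domain of Krull dimension $d$; by \cref{rslt:normal-and-connected-implies-irreducible}, $X$ is then irreducible.

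Next I decompose $Y$ into its (finitely many) irreducible components $Y_j = \Spa(B_j, B_j^+)$; each $B_j$ is a domain, and the restriction $f_j\colon Y_j \to X$ is finite. Choosing a point $y \in \JG(Y_j) \without \bigunion_{j'\ne j} Y_{j'}$, which exists by the second part of \cref{rslt:connected-components-properties}, I note that $Y \without \bigunion_{j'\ne j} Y_{j'}$ is an open neighborhood of $y$ contained in $Y_j$; comparing local dimensions at $y$ inside $Y$ (pure of dimension $d$) and inside $Y_j$ (pure of some dimension $d_j$ by \cref{rslt:irreducible-implies-pure-dimensional}) forces $d_j = d$. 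Then \cref{rslt:image-of-finite-morphism-is-adic-space} identifies $\img(f_j)$ with the closed adic subspace cut out by $J_j := \ker(A \to B_j)$, and \cref{rslt:dim-of-finite-morphism} applied to the finite surjection $Y_j \surjto \img(f_j)$ gives $\dim(A/J_j) = d$. Since $A$ is an equidimensional integral domain of dimension $d$ by \cite[Lemma 2.1.5]{conrad-irr-comp-of-rigid-spaces}, the only ideal $J \subset A$ with $\dim(A/J) = d$ is $J = 0$; hence $J_j = 0$, the induced map $A \to B_j$ is injective, and $f_j$ is surjective.

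Now each $f_j$ meets the hypotheses of \cref{rslt:openness-of-finite-morphism-affinoid-integral-case}, so each $f_j$ is open. To conclude that $f$ itself is open I observe that any open $U \subset Y$ can be written as $U = \bigunion_j (U \isect Y_j)$ with each $U \isect Y_j$ open in $Y_j$, so that $f(U) = \bigunion_j f_j(U \isect Y_j)$ is a union of open subsets of $X$. The step I expect to require the most care is the dimension-counting argument showing $\img(f_j) = X$; it simultaneously exploits the purity of dimension of $Y$, the irreducibility of $X$, and the equidimensionality of normal affinoid domains. Everything else is either a formal reduction or a direct application of the preceding lemmas.
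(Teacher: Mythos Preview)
Your proof is correct and follows essentially the same route as the paper's: localize to affinoid $X$, reduce to $A$ a normal domain, pass to irreducible components of $Y$, verify the induced ring map is injective via a dimension count, and invoke \cref{rslt:openness-of-finite-morphism-affinoid-integral-case}. Your argument is in fact slightly more careful than the paper's in one place: the paper simply asserts, after replacing $Y$ by an irreducible component $Y_i$, that the resulting affinoid ring $B$ is a $d$-dimensional domain, whereas you explicitly justify $\dim Y_j = d$ by picking $y \in \JG(Y_j)$ away from the other components and comparing local dimensions (note that this comparison is cleanest after passing to $Y_\red$, which the paper does but you omit; still, Krull dimension is insensitive to nilpotents, so your argument goes through).
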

\begin{proof}
Since the claim is local on $X$ we can assume that $X = \Spa(A, A^+)$ is affinoid and irreducible. Letting $(Y_i)_{i\in I}$ be the irreducible components of $Y$, it is enough to show that each $Y_i \to X$ is open; we can thus assume that $Y$ is irreducible as well. Since $f$ is finite and $X$ affinoid, $Y$ must be affinoid as well, say $Y = \Spa(B, B^+)$. We can furthermore assume that $A$ and $B$ are reduced, because passing to the reduction does not change the topology. Then $A$ and $B$ are integral domains of dimension $d$, the induced map $f^*\colon A \to B$ is finite and $A$ is normal. In order to apply \cref{rslt:openness-of-finite-morphism-affinoid-integral-case} it only remains to verify that $f^*\colon A \to B$ is injective. But this is clear, as otherwise $f^*$ factors over $A/I$ for some ideal $0 \ne I \subset A$, but $A/I$ has lower dimension than $A$ and the dimension of $B$ is at most the one of $A/I$; contradiction!
\end{proof}

\section{Stein Factorization} \label{sec:stein-factorization}

We will now prove the Stein Factorization Theorem for adic spaces. Again we fix a non-archimedean field $K$ and work with the category $\catwfintype K$ of adic spaces which are locally of weakly finite type over $K$ (see introduction).

The first major ingredient for the Stein Factorization Theorem is the fact that the direct image of a coherent sheaf along a proper morphism of finite type is again coherent. This has been proved by Kiehl in the rigid-analytic setting, and can easily be generalized to adic spaces in $\catwfintype K$:

\begin{lemma} \label{rslt:special-affinoid-open-immersion-implies-equiv-of-Coh}
Let $j\colon U \injto X$ be an open immersion of locally strongly noetherian analytic adic spaces such that for every open affinoid $V = \Spa(A, A^+) \subset X$ we have $U \isect V = \Spa(A, A'^+)$ for some $A'^+ \supset A^+$. Then $j_*\ri_U = \ri_X$ and the functors $j_*$ and $j^{-1}$ induce an equivalence of categories of coherent sheaves on $U$ and $X$:
\begin{align*}
	j_*\colon \catcoh{\ri_U} \isotofrom \catcoh{\ri_X}\noloc j^{-1}
\end{align*}
\end{lemma}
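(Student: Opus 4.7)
The plan is to reduce the statement to the affinoid case, where the point is that neither global sections of $\ri$ nor the category of coherent sheaves is sensitive to the choice between $A^+$ and $A'^+$; both depend only on the Tate algebra $A$ itself. Since the hypothesis on $j$ is preserved by passing to rational opens of $X$, it suffices to assume $X = \Spa(A, A^+)$ affinoid and $U = \Spa(A, A'^+)$ with $A'^+ \supset A^+$.

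First I would verify $j_*\ri_U = \ri_X$ by computing sections on a basis. For any rational $W = \Spa(B, B^+) \subset X$, the hypothesis gives $j^{-1}(W) = \Spa(B, B'^+)$ for some $B'^+ \supset B^+$, and therefore $\Gamma(j^{-1}(W), \ri_U) = B = \Gamma(W, \ri_X)$. Since rational subsets form a basis and both sheaves agree on them, this yields the identification.

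For the equivalence of categories I would invoke Huber's theorem for strongly noetherian analytic affinoids: the functor $M \mapsto M^{\Delta}$ defined by $\Gamma(W, M^{\Delta}) = M \tensor_A B$ on rational $W = \Spa(B, B^+)$ gives mutually inverse equivalences $\catmodfg A \isotofrom \catcoh{\ri_X}$, and analogously $\catmodfg A \isotofrom \catcoh{\ri_U}$ (with the same ring $A$ appearing on both sides). Under these equivalences $j^{-1}(M^{\Delta}_X) = M^{\Delta}_U$ because $j^{-1}$ restricts sections and rational opens of $U$ have the same coordinate rings as the corresponding rational opens of $X$. Conversely, for $\mathcal F = M^{\Delta}_U$ and any rational $W = \Spa(B, B^+) \subset X$, we have $\Gamma(W, j_*\mathcal F) = \Gamma(j^{-1}(W), \mathcal F) = M \tensor_A B = \Gamma(W, M^{\Delta}_X)$, so $j_*\mathcal F = M^{\Delta}_X$ as sheaves on the basis of rational opens. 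Hence both $j_*$ and $j^{-1}$ correspond to the identity functor on $\catmodfg A$ and are mutually inverse equivalences at the level of coherent sheaves.

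The main technical hurdle is establishing that $j_*$ actually sends coherent sheaves to coherent sheaves (and not merely to abstract $\ri_X$-modules). This is precisely what the computation in the previous paragraph accomplishes: once $j_*(M^{\Delta}_U)$ is identified on a rational-open basis with the coherent sheaf $M^{\Delta}_X$, the unit and counit of the adjunction $(j^{-1}, j_*)$ become isomorphisms on the respective subcategories of coherent sheaves, and the equivalence follows formally. The case of a general (not necessarily affinoid) $X$ is then obtained by gluing, using that the local equivalence is compatible with restriction to smaller affinoid opens.
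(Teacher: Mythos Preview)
Your proof is correct and follows essentially the same approach as the paper: reduce to the affinoid case, compute $j_*\ri_U = \ri_X$ on rational opens, and then invoke Huber's theorem identifying both $\catcoh{\ri_X}$ and $\catcoh{\ri_U}$ with $\catmodfg A$ to see that $j_*$ and $j^{-1}$ correspond to the identity on finite $A$-modules. The paper's argument is terser (it simply says ``using $j_*\ri_U = \ri_X$ one checks easily that $j_*$ provides the required equivalence''), while you spell out the verification that $j_*(M^\Delta_U) = M^\Delta_X$ on a basis; but the substance is the same.
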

\begin{proof}
The claim is local on $X$, so we can assume that $X = \Spa(A, A^+)$ and $U = \Spa(A, A'^+)$ for some $A'^+ \supset A^+$. Then clearly $\ri_X(X) = A = \ri_U(U)$ and using the same reasoning for every rational open subset of $X$ shows $j_*\ri_U = \ri_X$. On the other hand by \cite[Satz 3.3.18]{huber-bewertungsspektrum-und-rigide-geometrie} (or \cite[Theorem 2.3.3]{relative-p-adic-hodge-2}) the categories $\catcoh{\ri_U}$ and $\catcoh{\ri_X}$ are both equivalent to the category of finite $A$-modules, and using $j_*\ri_U = \ri_X$ one checks easily that $j_*$ provides the required equivalence.
\end{proof}

\begin{proposition} \label{rslt:direct-image-of-coherent-along-proper-is-coherent}
Let $X, Y \in \catwfintype K$, let $f\colon Y \to X$ be a proper map of finite type and let $\mathcal M$ be a coherent sheaf of $\ri_Y$-modules on $Y$. Then $f_*\mathcal M$ is a coherent sheaf of $\ri_X$-modules on $X$.
\end{proposition}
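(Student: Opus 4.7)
My plan is to reduce the claim to Kiehl's coherence theorem for proper morphisms of rigid-analytic spaces, exploiting the fact that \cref{rslt:special-affinoid-open-immersion-implies-equiv-of-Coh} allows me to modify the $+$-structures on both $X$ and $Y$ without changing the categories of coherent sheaves. Since the statement is local on $X$, I may assume $X = \Spa(A, A^+)$ is affinoid. Let $X^r := \Spa(A, A^\circ) \subset X$ be the associated rigid-analytic affinoid; because $A^+ \subset A^\circ$, the inclusion $j\colon X^r \injto X$ is an open immersion of the type appearing in \cref{rslt:special-affinoid-open-immersion-implies-equiv-of-Coh}, so $j_*$ is an equivalence of coherent sheaves.

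Analogously, for every affinoid open $V = \Spa(B, B^+) \subset Y$ I set $V^r := \Spa(B, B^\circ)$ and glue these into an adic space $Y^r$ equipped with an open immersion $i\colon Y^r \injto Y$ which is locally of the form in \cref{rslt:special-affinoid-open-immersion-implies-equiv-of-Coh}. Since $f(Y^r) \subset X^r$, the morphism $f$ restricts to a proper morphism of finite type $f^r\colon Y^r \to X^r$ between rigid-analytic spaces, to which Kiehl's coherence theorem applies. Hence $(f^r)_*(\mathcal M|_{Y^r})$ is coherent on $X^r$, and therefore so is $\mathcal N := j_*\bigl((f^r)_*(\mathcal M|_{Y^r})\bigr)$ on $X$, again by \cref{rslt:special-affinoid-open-immersion-implies-equiv-of-Coh}.

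To conclude, I identify $\mathcal N$ with $f_*\mathcal M$: by functoriality $\mathcal N = (f \comp i)_*(\mathcal M|_{Y^r}) = f_*\bigl(i_*(\mathcal M|_{Y^r})\bigr)$, and the local version of \cref{rslt:special-affinoid-open-immersion-implies-equiv-of-Coh} on $Y$ yields $i_*(\mathcal M|_{Y^r}) = \mathcal M$, since pushing a coherent sheaf through an open immersion of this type is inverse to restriction. The main technical hurdle I foresee is verifying that the $V^r$'s glue coherently into an adic space $Y^r$ and that $f^r$ is a proper morphism of rigid-analytic spaces in the sense required by Kiehl; both should follow from the standard equivalence between quasi-separated rigid-analytic spaces over $K$ and the corresponding full subcategory of $\catwfintype K$ (with $+$-structure $B^\circ$ on each affinoid chart), together with the preservation of properness under this equivalence as discussed in \cite{huber-etale-cohomology}.
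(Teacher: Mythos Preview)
Your approach is correct and matches the paper's proof almost exactly: both reduce to the affinoid case, pass to $X' = \Spa(A, A^\circ)$, invoke Kiehl on the rigid-analytic side, and transfer back via \cref{rslt:special-affinoid-open-immersion-implies-equiv-of-Coh}. The only difference is in how the ``rigidified'' $Y$ is built: the paper takes $Y' := Y \cprod_X X'$ as a fiber product, whereas you glue the $V^r = \Spa(B, B^\circ)$ by hand. These two constructions give the same space, but the fiber-product route is cleaner for exactly the two ``technical hurdles'' you flag: existence of $Y'$ is automatic, properness of $f'\colon Y' \to X'$ is immediate by base change, and the hypothesis of \cref{rslt:special-affinoid-open-immersion-implies-equiv-of-Coh} for $j_2\colon Y' \injto Y$ follows directly from the explicit description of affinoid fiber products. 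In your version you would still need to verify that the $V^r$ glue compatibly (equivalently, that rational localizations of tft $K$-algebras satisfy $C^+ = C^\circ$ when the ambient pair does) and that $Y^r = f^{-1}(X^r)$ so that $f^r$ is proper; both are true, but they amount to reproving that $Y^r = Y \cprod_X X'$. So there is no gap, just a slightly more laborious path to the same destination.
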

\begin{proof}
The claim is local on $X$, so we can assume that $X = \Spa(A, A^+)$ is affinoid. Let us first consider the case $A^+ = A^\circ$. Then $X$ is quasi-separated and of finite type over $K$, hence so is $Y$. By \cite[Proposition 4.5.(iv)]{huber-generalization-of-rigid-varieties} both $X$ and $Y$ are induced from rigid-analytic varieties and the same is true for all rational subsets. Thus the claim follows from the analogous claim in rigid-analytic geometry, see \cite[Theorem 3.3]{kiehl-endlichkeitssatz}.

Now let $A^+$ be general and let $X' := \Spa(A, A^\circ)$. Then we have an open immersion $j_1\colon X' \to X$. Let $Y' := Y \cprod_X X'$, so that we also have an open immersion $j_2\colon Y' \to Y$. Letting $f'\colon Y' \to X'$ denote the restriction of $f$, we get the diagram
\begin{center}\begin{tikzcd}
	Y' \arrow[r,hookrightarrow,"j_2"] \arrow[d,"f'"] & Y \arrow[d,"f"]\\
	X' \arrow[r,hookrightarrow,"j_1"] & X
\end{tikzcd}\end{center}
Both $j_1$ and $j_2$ satisfy the hypothesis of \cref{rslt:special-affinoid-open-immersion-implies-equiv-of-Coh}, as one checks easily using the explicit construction of fiber products of adic spaces. Thus the claim follows from \cref{rslt:special-affinoid-open-immersion-implies-equiv-of-Coh} and the fact that it holds for $f'$ be the first paragraph of the proof.
\end{proof}

The second major ingredient for our Stein Factorization Theorem is the geometric connectedness of certain fibers. Our proof is analogous to the the proof for schemes.

\begin{definition}
Let $f\colon Y \to X$ be a morphism of analytic adic spaces.
\begin{defenum}
	\item Let $x\colon \Spa(k, k^+) \to X$ be a morphism, where $k$ is an analytic field with open and bounded valuation subring $k^+$. The \emph{fiber} of $f$ at $x$ is the space $Y_x := \Spa(k, k^+) \cprod_X Y$. We say that $x$ is a \emph{geometric point} of $X$ if $k$ is algebraically closed. Moreover, by abuse of notation, we will often identify a point $x \in X$ with the injection $x\colon \Spa(k(x), k(x)^+) \injto X$ (cf. \cite[Definition 1.1.7]{huber-etale-cohomology}).

	\item We say that $f$ has \emph{connected fibers} if for every $x \in X$, the fiber $Y_x$ is connected.

	\item \label{def:geometrically-connected-fibers} We say that $f$ has \emph{geometrically connected fibers} if for every geometric point $\overline x$ of $X$, the fiber $Y_{\overline x}$ is connected.
\end{defenum}
\end{definition}

\begin{lemma} \label{rslt:geometrically-connected-fibers-equiv-connected-fibers-after-etale}
Let $X, Y \in \catwfintype K$ and let $f\colon Y \to X$ be a morphism. Then the following are equivalent:
\begin{lemenum}
	\item $f$ has geometrically connected fibers.
	\item For every $x \in X$ and every $x'\colon \Spa(k, \ri_k) \to \Spa(k(x), k(x)^+) \injto X$ such that $k$ is a finite separable extension of $k(x)$, the fiber $Y_{x'}$ is connected.
	\item For every étale map $U \to X$, the base-changed morphism $f_U\colon Y \cprod_X U \to U$ has connected fibers.
\end{lemenum}
\end{lemma}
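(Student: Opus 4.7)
My plan is to prove the equivalences (i)~$\Leftrightarrow$~(ii) and (ii)~$\Leftrightarrow$~(iii) separately, following the pattern of the analogous scheme-theoretic statement.

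For (i)~$\Rightarrow$~(ii), given $x'\colon \Spa(k, \ri_k) \to X$ with $k/k(x)$ finite separable, I pick a geometric point $\bar x\colon \Spa(\bar k, \bar k^\circ) \to X$ factoring through $x'$ by taking $\bar k$ to be an algebraic closure of $k$. Then $Y_{\bar x} = Y_{x'} \cprod_{\Spa(k, \ri_k)} \Spa(\bar k, \bar k^\circ)$, and the natural projection $Y_{\bar x} \to Y_{x'}$ is surjective because the base map $\Spa(\bar k, \bar k^\circ) \to \Spa(k, \ri_k)$ is surjective (both sides are single points); hence connectedness of $Y_{\bar x}$ transfers to $Y_{x'}$. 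For (ii)~$\Rightarrow$~(i), I write $\bar k = \colim_\alpha k_\alpha$ as the filtered colimit of finite separable sub-extensions of $\bar k/k(x)$ inside $\bar k$, so that $Y_{\bar x} = \varprojlim_\alpha Y_{x_\alpha'}$ is a cofiltered inverse limit of non-empty connected (by (ii)) quasi-compact spectral spaces with finite, hence closed surjective, transition maps. Any such cofiltered limit is connected by the standard argument for limits of connected $T_0$-spaces (passing to the maximal Hausdorff quotient, where $\pi_0$ commutes with cofiltered limits of compact spaces).

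For (ii)~$\Leftrightarrow$~(iii), the crucial observation is that for any open bounded valuation subring $k^+ \subset \ri_k$ of a complete non-archimedean field $k$, the base changes $Y \cprod_X \Spa(k, k^+)$ and $Y \cprod_X \Spa(k, \ri_k)$ differ only in the $+$-structure: locally they have the same underlying affinoid algebra $B \cpltensor_A k$. By \cref{rslt:X-connected-iff-JG-connected}, connectedness of an object of $\catwfintype k$ is detected by the underlying affinoid ring (not the $+$-structure), so these two base changes have the same connected components. Granted this, (ii)~$\Rightarrow$~(iii) is immediate: for étale $g\colon U \to X$ and $u \in U$ lying over $x$, the extension $k(u)/k(x)$ is finite separable, so (ii) applied to $\Spa(k(u), \ri_{k(u)}) \to X$ yields connectedness, which matches that of $(Y_U)_u = Y \cprod_X \Spa(k(u), k(u)^+)$. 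Conversely, (iii)~$\Rightarrow$~(ii) follows because the local structure theorem for étale morphisms of adic spaces (cf.\ \cite[Lemma 2.2.8]{huber-etale-cohomology}) produces, for any finite separable $k/k(x)$, an étale $U \to X$ with a point $u$ satisfying $k(u) \isoto k$; applying (iii) at $u$ then delivers the connectedness demanded by (ii).

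The main obstacle, I expect, is justifying the ``same underlying algebra'' step in (ii)~$\Leftrightarrow$~(iii): one must verify that the completed tensor product $B \cpltensor_A k$ on affinoid pieces is intrinsically defined, independently of the choice of $+$-structures, and then combine this with \cref{rslt:X-connected-iff-JG-connected} to conclude that the global connected components of the two base changes agree, even when the topological spaces differ in their higher-rank points.
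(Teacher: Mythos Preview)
Your argument for (ii)~$\Rightarrow$~(i) has a genuine gap. You write $\bar k = \colim_\alpha k_\alpha$ as a filtered colimit of finite separable sub-extensions of $k(x)$ inside $\bar k$, but this fails in two ways. First, the paper's \cref{def:geometrically-connected-fibers} allows a geometric point $\bar x\colon \Spa(\bar k,\bar k^+)\to X$ with $\bar k$ \emph{any} algebraically closed non-archimedean field mapping to $X$; in particular $\bar k$ need not be algebraic over $k(x)$ at all, so no colimit of finite extensions will reach it. Second, even when $\bar k$ is the completed algebraic closure of $k(x)$, in positive characteristic the separable sub-extensions only exhaust the separable closure, not the full algebraic closure, and you still have a completion step left over. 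Your inverse-limit argument therefore proves connectedness only for one very special geometric point, not for all of them.

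The paper handles this differently: after reducing to $X=\Spa(k,\ri_k)$ and $Y=\Spa(B,B^+)$ affinoid, it uses \cref{rslt:X-connected-iff-JG-connected} to translate (ii) into the statement that $\Spec(B\tensor_k k')$ is connected for every finite separable $k'/k$, and then invokes the scheme-theoretic fact \cite[Lemma~0389]{stacks-project} that this already forces $\Spec B$ to be geometrically connected over $k$, i.e.\ $\Spec(B\tensor_k L)$ is connected for \emph{every} field extension $L/k$. Applying this with $L=\bar k$ and going back through \cref{rslt:X-connected-iff-JG-connected} gives connectedness of $Y_{\bar x}$ for arbitrary $\bar k$. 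You should replace your limit argument by this reduction. (Two smaller points: in (i)~$\Rightarrow$~(ii) you need the \emph{completed} algebraic closure, since the algebraic closure of a non-archimedean field is generally not complete; and in (iii)~$\Rightarrow$~(ii), \cite[Lemma~2.2.8]{huber-etale-cohomology} describes the local structure of a given \'etale map rather than producing one with prescribed residue field --- the paper constructs the required \'etale neighbourhood explicitly via Krasner's lemma.)
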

\begin{proof}
We first note that whether $f$ has connected fibers or not can be checked on the rank-1 points of $X$: If $x \in X$ is any point, let $x_0 \in X$ be the maximal generalization of $X$. Then there is a natural open immersion $Y_{x_0} \injto Y_x$ and each point of $Y_x$ has a generalization in $Y_{x_0}$. Thus every non-trivial disjoint open cover of $Y_x$ produces a non-trivial disjoint open cover of $Y_{x_0}$; therefore if $Y_{x_0}$ is connected then so is $Y_x$. We immediately deduce that (ii) implies (iii) (cf. \cite[Proposition 1.7.5]{huber-etale-cohomology}).

We next show that (iii) implies (ii): Given $x'\colon \Spa(k, \ri_k) \to X$ with image $x \in X$, it is enough to show that there is an étale map $\alpha\colon U \to X$ with a point $u \in U$ such that $x' = \alpha \comp u$. To see this we can assume that $X = \Spa(A, A^+)$ is affinoid. Then $\hat k(x)$ is the completion of the quotient field $\kappa(\mathfrak p) = \mathrm{Quot}(A/\mathfrak p)$, where $\mathfrak p = \supp x$ (see \cite[Lemma 2.4.17.(a)]{relative-p-adic-hodge-1}). By Krasner's Lemma (cf. \cite[Proposition 3.4.2/5]{nla.cat-vn772374})) there is a finite separable extension $\kappa'$ of $\kappa(\mathfrak p)$ such that $k = \kappa' \tensor_{\kappa(\mathfrak p)} k(x)$. By the proof of \cite[Lemma 00UD]{stacks-project}, after possibly replacing $A$ by the localization $A_f$ for some $f \in A$ (which corresponds to an open subspace in the adic world), there is a finite étale $A$-algebra $A'$ and a prime ideal $\mathfrak p' \subset A'$ such that $\kappa(\mathfrak p') = \kappa'$. Letting $U = \Spa(A', A'^+)$, where $A'^+$ is the integral closure of $A^+$ in $A'$, one sees easily that there is a canonical injection $u\colon \Spa(k, \ri_k) \injto U$ whose image point is the rank-1 valuation with support $\mathfrak p'$.

It is clear that (i) implies (ii), so it remains to show that (ii) implies (i). We can then assume that $X = \Spa(k, \ri_k)$ for some non-archimedean field $k$. One also easily reduces to the case that $Y = \Spa(B, B^+)$ is affinoid. By (ii), for every finite separable extension $k'$ of $k$, the space $Y \cprod_X \Spa(k', \ri_{k'}) = \Spa(B \tensor_k k', B'^+)$ is connected (where $B'^+$ is the integral closure of $B^+$ in $B \tensor_k k'$). But this means that $\Spec(B \tensor_k k')$ is connected (cf. \cref{rslt:X-connected-iff-JG-connected}), which overall implies that $\Spec B$ is geometrically connected over $k$ (see \cite[Lemma 0389]{stacks-project}). Thus, given any algebraically closed non-archimedean field $\overline k$ over $k$, $\Spec(B \tensor_k \overline k)$ is connected. Then $Y \cprod_X \Spa(\overline k, \ri_{\overline k}) = \Spa(B \tensor_k \overline k, B'^+)$ is connected by \cref{rslt:X-connected-iff-JG-connected}.
\end{proof}

\begin{lemma} \label{rslt:etale-base-change-for-coherent-sheaves}
Let
\begin{center}\begin{tikzcd}
	Y' \arrow[r,"g'"] \arrow[d,"f'"] & Y \arrow[d,"f"]\\
	X' \arrow[r,"g"] & X
\end{tikzcd}\end{center}
be a cartesian diagram of adic spaces in $\catwfintype K$ and assume that $g$ is étale and $f$ is proper of finite type. Let $\mathcal M$ be a coherent sheaf of $\ri_Y$-modules on $Y$. Then the natural base change map
\begin{align*}
	g^* f_* \mathcal M \isoto f'_* g'^* \mathcal M
\end{align*}
is an isomorphism of sheaves on $X'$.
\end{lemma}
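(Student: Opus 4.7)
The plan is to reduce the claim to a computation on affinoid pieces and then exchange the tensor product $-\otimes_A A'$ with a finite limit, using that étale morphisms are flat. The statement is local on $X'$, and by further shrinking $X'$ I can assume that its image lies in an affinoid open of $X$. Thus, without loss of generality, $X = \Spa(A, A^+)$ and $X' = \Spa(A', A'^+)$ are both affinoid, and $g$ corresponds to a flat continuous ring homomorphism $A \to A'$. Applying \cref{rslt:direct-image-of-coherent-along-proper-is-coherent} to both $f$ and its base change $f'$ (which is still proper of finite type), both sides of the base-change map are coherent $\ri_{X'}$-modules. It therefore suffices to verify that the natural map is an isomorphism on global sections, i.e., that $\mathcal M(Y) \otimes_A A' \isoto (g'^* \mathcal M)(Y')$.

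Since $f$ is proper (hence separated) and $X$ is affinoid, $Y$ is quasi-compact and separated. I would choose a finite cover of $Y$ by affinoid opens $Y_i = \Spa(B_i, B_i^+)$ whose pairwise intersections $Y_{ij} := Y_i \isect Y_j$ are again affinoid (enabled by separatedness of $Y$). Writing $N_i$ and $N_{ij}$ for the finite modules of sections of $\mathcal M$ on $Y_i$ and $Y_{ij}$, the sheaf axiom identifies $\mathcal M(Y)$ with the equalizer of $\prod_i N_i \rightrightarrows \prod_{i,j} N_{ij}$. The base-changed cover $\{Y_i \cprod_X X'\}$ of $Y'$ consists of affinoids with affinoid pairwise intersections $Y_{ij} \cprod_X X'$, and on each such affinoid piece the pullback $g'^* \mathcal M$ corresponds under the standard equivalence to the tensor products $N_i \otimes_A A'$ and $N_{ij} \otimes_A A'$. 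The sheaf axiom on $Y'$ then yields the analogous equalizer presentation of $(g'^* \mathcal M)(Y')$. Since $A \to A'$ is flat, $-\otimes_A A'$ commutes with this finite limit, giving the desired isomorphism.

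The main obstacle I expect is securing an affinoid cover of $Y$ whose pairwise intersections are again affinoid, which is exactly where the separatedness of $f$ is crucial. The remaining identifications used — that fiber products of affinoids along étale maps are again affinoid, and that pullback of coherent sheaves between them corresponds to tensor product of finite modules — are standard features of Huber's affinoid theory, though some care is needed to ensure that the relevant completed tensor products coincide with the ordinary tensor products appearing in the module computations (which is automatic for the finite-étale and open-immersion pieces into which an étale morphism locally decomposes).
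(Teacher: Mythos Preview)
Your proposal is correct and follows essentially the same route as the paper: reduce to affinoid $X$ and $X'$, take a finite affinoid cover of $Y$ with affinoid pairwise intersections (using separatedness of $f$), and compare the two equalizer sequences via flatness of $A \to A'$. The only cosmetic differences are that the paper invokes \cite[Lemma 2.2.8]{huber-etale-cohomology} at the outset to reduce to $A \to A'$ \emph{finite} étale (thereby making the completed-versus-ordinary tensor product issue disappear immediately, rather than deferring it as you do), and that the paper first reduces to $\mathcal M = \ri_Y$ before running the equalizer argument, whereas you carry a general $\mathcal M$ throughout.
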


\begin{remark} \label{rmk:flat-base-change-without-properness}
One should see \cref{rslt:etale-base-change-for-coherent-sheaves} as a weak analogue of the flat base change theorem in algebraic geometry. From the proof it is easy to see that one can weaken the hypothesis on $g$ to just being flat instead of étale (then we cannot assume that $A'$ is finite over $A$ in the proof and hence need to add completions, but noetherian completions are exact) -- we chose not to do that because we did not want to introduce flat morphisms and only need the base change result in case $g$ is étale.

It is however crucial for our proof that $f$ is proper and of finite type, because the proof (similar to the case of schemes) makes use of the fact that the direct image of a coherent sheaf along $f$ is (quasi-)coherent (by \cref{rslt:direct-image-of-coherent-along-proper-is-coherent}). In order to remove the properness assumption, one needs to have a good theory of quasi-coherent sheaves on adic spaces, which does not seem to exist in the classical language due to topological issues. Very recently, Scholze managed to fix this problem in his new theory of analytic spaces \cite{scholze-analytic-spaces}. In fact, the new formalism provides a very general base change result, see \cite[Proposition 12.14]{scholze-analytic-spaces}: The new requirement is that $f$ is qcqs and that $g$ is ``steady''; the latter condition is closely related to $g$ being adic and is therefore extremely general.
\end{remark}

\begin{proof}[Proof of \cref{rslt:etale-base-change-for-coherent-sheaves}]
The claim is local on $X'$ and it clearly holds if $g$ is an open immersion, so by using \cite[Lemma 2.2.8]{huber-etale-cohomology} we can assume that $X = \Spa(A, A^+)$ and $X' = \Spa(A', A'^+)$ are affinoid with $A \to A'$ being finite étale. By \cref{rslt:direct-image-of-coherent-along-proper-is-coherent} the sheaf $f_* \mathcal M$ is coherent, hence equals the coherent sheaf associated to the finite $A$-module $M_A := \Gamma(Y, \mathcal M)$. Then $g^* f_* \mathcal M$ is the coherent sheaf on $X'$ which is associated to the finite $A'$-module $M_A \tensor_A A'$ (cf. \cite[Lemma 01BJ]{stacks-project}). Using a similar treatment for $f'_* g'^* \mathcal M$ we see that the claim boils down to showing
\begin{align*}
	\Gamma(Y, \mathcal M) \tensor_A A' = \Gamma(Y', g'^* \mathcal M).
\end{align*}
Again by \cite[Lemma 01BJ]{stacks-project} we have $\Gamma(Y', g'^* \mathcal M) = \Gamma(Y, \mathcal M) \tensor_{\ri_Y(Y)} \ri_{Y'}(Y')$, which reduces the claim to the case $\mathcal M = \ri_Y$. Choose a finite cover $Y = \bigunion_{i = 1}^n U_i$ with all $U_i = \Spa(B_i, B_i^+)$ affinoid. For every $i = 1, \dots, n$ let $U_i' := U_i \cprod_X X' = \Spa(B_i \tensor_A A', B_i'^+)$, where $B_i'^+$ is the integral closure of $B_i^+$ in $B_i \tensor_A A'$ (cf. proof of \cite[Proposition 1.2.2]{huber-etale-cohomology}). The sheaf property of $\ri_Y$ and $\ri_{Y'}$ provides exact sequences
\begin{align*}
	0 \to \ri_Y(Y) \to \bigdsum_{i=1}^n \ri_Y(U_i) \to \bigdsum_{i,j=1}^n \ri_Y(U_i \isect U_j),\\
	0 \to \ri_{Y'}(Y') \to \bigdsum_{i=1}^n \ri_{Y'}(U_i') \to \bigdsum_{i,j=1}^n \ri_{Y'}(U_i' \isect U_j').
\end{align*}
Note that $\ri_{Y'}(U_i') = \ri_Y(U_i) \tensor_A A'$ by the explicit description of $U_i$ and $U_i'$. Moreover, since $f$ is separated all the intersections $U_i \isect U_j$ are affinoid (by the same argument as in the case of schemes, using that fiber products and closed subspaces of affinoids are affinoid; for the latter see \cite[Proposition 3.6.27]{huber-bewertungsspektrum-und-rigide-geometrie}), hence satisfy the similar relation $\ri_{Y'}(U_i' \isect U_j') = \ri_Y(U_i \isect U_j) \tensor_A A'$. Therefore the second of the above exact sequences reads
\begin{align*}
 	0 \to \ri_{Y'}(Y') \to \bigdsum_{i=1}^n \ri_Y(U_i) \tensor_A A' \to \bigdsum_{i,j=1}^n \ri_Y(U_i \isect U_j) \tensor_A A'.
\end{align*}
Comparing this to the first exact sequence and noting that $A'$ is flat over $A$, we deduce $\ri_{Y'}(Y') = \ri_Y(Y) \tensor_A A'$, as desired.
\end{proof}

\begin{lemma} \label{rslt:disconnected-fiber-implies-disconnected-neighbourhood}
Let $f\colon Y \to X$ be a proper morphism of analytic adic spaces and let $x \in X$ such that $Y_x$ is disconnected, say the union of disjoint open subsets $V_1$ and $V_2$. Then there is an open neighbourhood $U \subset X$ of $x$ such that $f^{-1}(U) = U_1 \dunion U_2$ for open subsets $U_1, U_2 \subset Y$ with $V_i \subset U_i$ for $i = 1, 2$.
\end{lemma}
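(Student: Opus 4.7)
The plan is to lift the idempotent arising from the decomposition $Y_x=V_1\dunion V_2$ to a neighborhood of $x$, using coherence of the direct image. Since the claim is local on $X$, I would first reduce to the case $X=\Spa(A,A^+)$ affinoid. A proper morphism of analytic adic spaces is in particular of finite type in Huber's sense, so by \cref{rslt:direct-image-of-coherent-along-proper-is-coherent} the pushforward $\mathcal A:=f_*\ri_Y$ is a coherent $\ri_X$-algebra, corresponding to a finite $A$-algebra $M:=\Gamma(Y,\ri_Y)$. Producing an affinoid neighborhood $U=\Spa(A',A'^+)$ of $x$ together with a decomposition $f^{-1}(U)=U_1\dunion U_2$ satisfying $V_i\subset U_i$ is then equivalent to producing an idempotent $e\in M\tensor_A A'=\Gamma(f^{-1}(U),\ri_Y)$ whose restriction to $Y_x$ equals $\mathbf{1}_{V_1}$; the summands $U_i$ are then the clopen loci $\{e=1\}$ and $\{e=0\}$, disjoint because $e(1-e)=0$ and covering because $e+(1-e)=1$.

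To construct such an $e$, I would begin with the tautological idempotent $e_0:=\mathbf{1}_{V_1}\in\Gamma(Y_x,\ri_{Y_x})$, then pass to the completed local ring $\hat A_{\mathfrak m}:=\hat\ri_{X,x_0}$, where $x_0$ is the maximal generalization of $x$ with $\mathfrak m:=\supp x_0$ a maximal ideal of $A$ by \cref{rslt:properties-of-JG}. The ring $\widehat M:=M\tensor_A\hat A_{\mathfrak m}$ is a finite, complete, semi-local algebra and hence Henselian, so idempotents in its residue algebra lift uniquely to $\widehat M$. The key technical input I would need is an adic analogue of the theorem on formal functions, namely that the natural comparison map
\[
	\widehat M/\mathfrak m\widehat M\longto\Gamma(Y_{x_0},\ri_{Y_{x_0}})
\]
is an isomorphism (or at least captures $e_0|_{Y_{x_0}}$). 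Granted this, $e_0$ lifts to an idempotent $\hat e\in\widehat M$, which by a standard approximation argument (combined with the rigidity of idempotents under small perturbations) descends to an honest idempotent $e\in M\tensor_A A'$ on a sufficiently small affinoid neighborhood $U=\Spa(A',A'^+)\ni x$.

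The main obstacle will be justifying the formal-functions comparison displayed above. I would attempt to prove it by applying \cref{rslt:direct-image-of-coherent-along-proper-is-coherent} to the base changes $Y\cprod_X V\to V$ as $V$ ranges over affinoid neighborhoods of $x_0$, combined with \cref{rslt:etale-base-change-for-coherent-sheaves}, and then passing to the inverse limit. A more topological alternative --- covering each $V_i$ by finitely many rational opens of $Y_x$, lifting these to quasi-compact opens $\tilde V_i\subset Y$ via the explicit construction of fibered products, and using closedness of $f$ to shrink $X$ so that the $\tilde V_i$ become disjoint and cover $f^{-1}(U)$ --- would avoid formal functions entirely, but must handle the awkward point that $\tilde V_1\isect\tilde V_2$ is open rather than closed, so that its $f$-image is not directly closed; excising this overlap therefore requires a separate argument (for instance, writing the intersection as a union of rational opens and using properness of $f$ restricted to the closed complements).
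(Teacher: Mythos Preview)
Your idempotent-lifting strategy has several genuine gaps. First, the lemma is stated for arbitrary analytic adic spaces and a merely proper $f$; ``proper'' in Huber's sense means $^+$weakly of finite type, separated, and universally closed --- it does \emph{not} imply finite type, so \cref{rslt:direct-image-of-coherent-along-proper-is-coherent} is unavailable (and that proposition moreover requires $X,Y\in\catwfintype K$, which is not assumed here). Second, your claim that the maximal generalization $x_0$ of $x$ has maximal support is false in general: vertical generalizations preserve the support, so if $x$ is, say, the Gauss point of the closed unit disk then $\supp x_0=(0)$; hence the completed stalk you form need not be semi-local, and the Henselian lifting step breaks down. Third, the formal-functions comparison you need --- relating the stalk of $f_*\ri_Y$ to global sections on the fiber --- is a substantial theorem in its own right, and your sketch via \cref{rslt:etale-base-change-for-coherent-sheaves} does not suffice, since passing to the residue field $k(x)$ is not an \'etale base change.

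The paper's proof is purely topological and avoids all of this. One passes to the Berkovich spectrum $\abs{Z}^B\subset\abs{Z}$ of maximal points, which is Hausdorff for taut $Z$. After shrinking $X$ to an affinoid, $\abs{Y}^B$ is compact Hausdorff (hence T4), so the disjoint compact sets $\abs{V_1}^B,\abs{V_2}^B$ can be separated by disjoint opens of $\abs{Y}^B$; their preimages under the retraction $\abs{Y}\to\abs{Y}^B$ give disjoint opens $U'_1,U'_2\subset Y$ with $V_i\subset U'_i$. Then $Z:=\abs{Y}\setminus(\abs{U'_1}\cup\abs{U'_2})$ is closed, $f(Z)$ is closed by properness, and $U:=\abs{X}\setminus f(Z)$ works. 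This is exactly the obstacle you flagged in your topological alternative: the Berkovich retraction converts ``separate two closed subsets in a non-Hausdorff spectral space'' into ``separate two compact subsets in a Hausdorff space'', after which the complement of the separating opens is closed and properness applies directly.
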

\begin{proof}
We use the associated Berkovich spectrum of all the spaces involved: For any analytic adic space $Z$, let $\abs Z^B$ be the subset of all maximal points of $Z$. There is a natural continuous projection $Z \surjto \abs Z^B$ by \cite[Lemma 8.1.7.(ii)]{huber-etale-cohomology}, and if $Z$ is taut (e.g. qcqs) then $\abs Z^B$ is Hausdorff by \cite[Lemma 8.1.8.(ii)]{huber-etale-cohomology}.

The claim is local on $X$, so we can assume that $X$ is qcqs (e.g. affinoid); then $Y$ is also qcqs, so in particular $X$ and $Y$ are taut. Clearly $\abs{Y_x}^B = \abs{Y_x} \isect \abs{Y}^B$ and $\abs{Y_x}^B = \abs{V_1}^B \dunion \abs{V_2}^B$ with $\abs{V_1}^B, \abs{V_2}^B \subset \abs{Y_x}^B$ open subsets. By the discussion at the beginning of the proof, $\abs{Y}^B$, $\abs{V_1}^B$ and $\abs{V_2}^B$ are quasi-compact Hausdorff spaces and in particular T4 spaces. Hence there exist disjoint open neighborhoods $U_1''$ and $U_2''$ of $\abs{V_1}^B$ and $\abs{V_2}^B$ inside $\abs{Y}^B$. Let $U_1', U_2' \subset Y$ be their preimages under the projection $Y \to \abs{Y}^B$. Then $U_1'$ and $U_2'$ are disjoint open subsets of $Y$ such that $V_i \subset U_i'$ for $i = 1, 2$.

Let $Z := \abs Y \setminus (\abs{U_1'} \union \abs{U_2'})$, which is a closed subset of $\abs Y$. By the properness of $f$, $f(Z) \subset \abs X$ is closed. Now let $U := \abs X \setminus f(Z)$. Then $U$ is an open neighborhood of $x$ and $f^{-1}(U) = U_1 \dunion U_2$ for $U_i := f^{-1}(U) \isect U_i'$, as desired.
\end{proof}

\begin{proposition} \label{rslt:proper-direct-image-geometrically-connected-fibers}
Let $X$ and $Y$ be analytic adic spaces and let $f\colon Y \to X$ be a proper map of finite type 
such that $f_* \ri_Y = \ri_X$ (via the natural morphism). Then $f$ has geometrically connected fibers.
\end{proposition}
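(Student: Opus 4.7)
The plan is to reduce, via an étale base change trick, to showing that $f$ has connected (rather than geometrically connected) fibers, and then to derive a contradiction from a disconnected fiber by producing a non-trivial idempotent in the local ring $\ri_{X,x}$. This mimics the standard argument in algebraic geometry.

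First I would invoke \cref{rslt:geometrically-connected-fibers-equiv-connected-fibers-after-etale} to reduce the problem to showing that for every étale morphism $g\colon U \to X$, the base change $f_U\colon Y \cprod_X U \to U$ has connected fibers. The crucial observation is that the hypothesis $f_*\ri_Y = \ri_X$ is stable under étale base change: applying \cref{rslt:etale-base-change-for-coherent-sheaves} to the coherent sheaf $\ri_Y$ yields $g^* f_*\ri_Y = (f_U)_*\ri_{Y \cprod_X U}$, and combining with $g^*\ri_X = \ri_U$ gives $(f_U)_*\ri_{Y \cprod_X U} = \ri_U$. Since properness and finite type are clearly preserved under base change, it suffices to prove the theorem with ``geometrically connected fibers'' weakened to ``connected fibers''.

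So I would assume, for contradiction, that some fiber $Y_x$ decomposes as a disjoint union $V_1 \dunion V_2$ of non-empty open subsets. By \cref{rslt:disconnected-fiber-implies-disconnected-neighbourhood}, there exists an open neighborhood $U \subset X$ of $x$ and a decomposition $f^{-1}(U) = U_1 \dunion U_2$ with $V_i \subset U_i$. Then $\ri_Y(f^{-1}(U)) = \ri_Y(U_1) \cprod \ri_Y(U_2)$, and the element $(1,0)$ produces a non-trivial idempotent $e \in (f_*\ri_Y)(U) = \ri_X(U)$. For any smaller open neighborhood $U' \subset U$ of $x$, the preimage $f^{-1}(U')$ still contains both $V_1$ and $V_2$ (since $V_i \subset f^{-1}(x) \subset f^{-1}(U')$), so $e|_{U'}$ remains non-trivial. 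Passing to the colimit, $e$ determines a non-trivial idempotent in the stalk $\ri_{X,x}$, contradicting the standard fact that the stalks of the structure sheaf of an adic space are local rings.

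The main obstacle is arguably the reduction step: one must verify that $f_*\ri_Y = \ri_X$ survives étale base change, which ultimately relies on Kiehl's properness theorem (\cref{rslt:direct-image-of-coherent-along-proper-is-coherent}) and the compatibility furnished by \cref{rslt:etale-base-change-for-coherent-sheaves}. Once this is in place the remaining argument is essentially formal, being just the classical scheme-theoretic idempotent argument transported to the adic setting via the topological input of \cref{rslt:disconnected-fiber-implies-disconnected-neighbourhood}.
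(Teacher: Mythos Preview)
Your proof is correct and follows essentially the same route as the paper: reduce to connected fibers via \cref{rslt:geometrically-connected-fibers-equiv-connected-fibers-after-etale}, use \cref{rslt:etale-base-change-for-coherent-sheaves} to preserve the hypothesis $f_*\ri_Y = \ri_X$ under étale base change, and then apply \cref{rslt:disconnected-fiber-implies-disconnected-neighbourhood} to manufacture a non-trivial idempotent contradiction. The only cosmetic difference is that the paper shrinks $U$ to a connected neighbourhood of $x$ so that $\ri_X(U)$ itself has no non-trivial idempotent, whereas you instead pass to the local stalk $\ri_{X,x}$; both finish the argument equally well.
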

\begin{proof}
Let $U \to X$ be any étale morphism and let $f_U\colon Y_U := Y \cprod_X U \to U$ be the base change. By \cref{rslt:geometrically-connected-fibers-equiv-connected-fibers-after-etale} it is enough to show that $f_U$ has connected fibers. But by \cref{rslt:etale-base-change-for-coherent-sheaves} we have $f_{U*} \ri_{Y_U} = \ri_U$, hence we can replace $U$ by $X$ and reduce to showing that $f$ has connected fibers. Let $x \in X$ be given and assume that $Y_x$ is disconnected. By \cref{rslt:disconnected-fiber-implies-disconnected-neighbourhood} there is a connected open neighborhood $U \subset X$ of $x$ such that $f^{-1}(U)$ is disconnected. But then $(f_*\ri_Y)(U) = \ri_Y(f^{-1}(U))$ has a non-trivial idempotent element, while $\ri_X(U)$ does not, contradicting the hypothesis that $f_* \ri_Y = \ri_X$.
\end{proof}

We can finally prove a version of the Stein Factorization Theorem:

\begin{theorem}[Stein Factorization] \label{rslt:stein-factorization}
Let $X, Y \in \catwfintype K$ and let $f\colon Y \to X$ be a proper map of finite type. Then $f$ factors as
\begin{align*}
	Y \xto{h} Z \xto{g} X
\end{align*}
with the following properties:
\begin{thmenum}
	\item The map $g$ is finite.
	\item The map $h$ is proper and has geometrically connected fibers.
\end{thmenum}
\end{theorem}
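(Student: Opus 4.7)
The plan is to mimic the scheme-theoretic construction of the Stein factorization, taking $Z$ to be a relative ``Spa'' of the coherent sheaf of algebras $\mathcal A := f_* \ri_Y$. Note that $\mathcal A$ is a coherent sheaf of $\ri_X$-algebras by \cref{rslt:direct-image-of-coherent-along-proper-is-coherent}, since $\ri_Y$ is coherent and $f$ is proper of finite type.

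First I would construct $g\colon Z \to X$ locally and then glue. On an affinoid open $U = \Spa(A, A^+) \subset X$, the equivalence between coherent $\ri_U$-modules and finite $A$-modules yields a finite $A$-algebra $A' := \mathcal A(U) = \ri_Y(f^{-1}(U))$. Let $A'^+$ be the integral closure of $A^+$ in $A'$ (following the convention already used in \cref{rslt:image-of-finite-morphism-is-adic-space}) and set $Z_U := \Spa(A', A'^+)$, with the induced finite map $g_U\colon Z_U \to U$. Functoriality of the coherent/finite-module equivalence under rational localizations together with \cref{rslt:special-affinoid-open-immersion-implies-equiv-of-Coh} ensures that these local pieces glue to a global finite $g\colon Z \to X$, proving (i). The universal property of $\Spa$ then produces, locally on $U$, a morphism $h_U\colon f^{-1}(U) \to Z_U$ induced by the identity $A' = \ri_Y(f^{-1}(U))$, with $A'^+$ landing in $\ri^+_Y(f^{-1}(U))$ since the latter contains the image of $A^+$ and is integrally closed in $\ri_Y(f^{-1}(U))$. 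These glue to a unique global $h\colon Y \to Z$ satisfying $g \comp h = f$.

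Next I would establish that $h$ is proper. Since $f$ is proper and $g$ is finite, hence separated, this follows by the standard graph decomposition $h = p \comp \Gamma_h$, where $\Gamma_h\colon Y \to Y \cprod_X Z$ is a closed immersion thanks to the separatedness of $g$, and $p\colon Y \cprod_X Z \to Z$ is proper as a base change of $f$. As $f$ is of finite type and $g$ is finite, $h$ is of finite type too, so \cref{rslt:direct-image-of-coherent-along-proper-is-coherent} applies to $h$.

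To finish, I would verify $h_* \ri_Y = \ri_Z$ via the canonical morphism. This is local on $X$: on $U$ as above one has $(g_U)_* (h_U)_* \ri_{f^{-1}(U)} = (f_* \ri_Y)|_U = \mathcal A|_U = (g_U)_* \ri_{Z_U}$, and tracing through the construction the canonical map $\ri_{Z_U} \to (h_U)_* \ri_{f^{-1}(U)}$ becomes the identity after applying $(g_U)_*$; since $g_U$ is affine this forces the map itself to be an isomorphism. With $h_* \ri_Y = \ri_Z$ in hand, \cref{rslt:proper-direct-image-geometrically-connected-fibers} applied to the proper map of finite type $h$ delivers the geometrically connected fibers required in (ii). I expect the main technical subtlety to be the construction and gluing of the relative ``Spa'' of $\mathcal A$, and in particular the correct choice of $+$-ring; once this is set up, properness of $h$ and the isomorphism $h_*\ri_Y = \ri_Z$ follow by essentially formal manipulations.
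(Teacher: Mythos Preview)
Your proposal is correct and follows essentially the same route as the paper: construct $Z$ as a relative $\Spa$ of the coherent $\ri_X$-algebra $f_*\ri_Y$ by gluing over affinoids, obtain $h$ with $h_*\ri_Y=\ri_Z$, and then invoke \cref{rslt:proper-direct-image-geometrically-connected-fibers}. You actually supply more detail than the paper does, in particular the explicit verification that $h$ is proper of finite type (needed to apply \cref{rslt:proper-direct-image-geometrically-connected-fibers}), which the paper leaves implicit.
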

\begin{proof}
Given any coherent sheaf $\mathcal M$ on $X$, there is a unique adic space $M$ with a (necessarily finite) map $m\colon M \to X$ such that for any affinoid $U \subset X$ with preimage $V = m^{-1}(U)$ we have $\ri_M(V) = \mathcal M(U)$. Indeed, if $X$ is affinoid this is clear, and the general case is easily obtained by glueing. Now by \cref{rslt:direct-image-of-coherent-along-proper-is-coherent} the sheaf $f_*\ri_Y$ is a coherent sheaf of $\ri_X$-modules on $X$. Applying the gluing construction to $\mathcal M = f_*\ri_Y$ we obtain a finite map $g\colon Z \to X$ with $g_*\ri_{Z} = f_*\ri_Y$. From the construction of $Z$ one sees easily that there is a map $h\colon Y \to Z$ such that $f = g \comp h$ and $h_*\ri_Y = \ri_Z$. Then \cref{rslt:proper-direct-image-geometrically-connected-fibers} implies that $h$ has geometrically connected fibers.
\end{proof}


\bibliography{bibliography}
\addcontentsline{toc}{part}{References}

\end{document}